\documentclass[3p,times]{elsarticle}

\usepackage{amsmath}
\usepackage{amsthm}

\usepackage{amssymb}
\usepackage[figuresright]{rotating}

\usepackage[font=footnotesize,labelfont=bf]{caption}
\usepackage[font=footnotesize,labelfont=bf]{subcaption}
\newtheorem{defn}{Definition}
\newtheorem{prop}{Proposition}
\newtheorem{cor}{Corollary}
\newtheorem{exmp}{Example}
\newtheorem{thm}{Theorem}
\newtheorem{obs}{Observation}
\newtheorem{property}{Property}
\usepackage[english]{babel}
\selectlanguage{english}

\usepackage{empheq,xcolor}

\begin{document}
	
	\begin{frontmatter}

		\title{Operational vs. Umbral Methods and Borel Transform}

		\author[Enea]{G. Dattoli}
		\ead{giuseppe.dattoli@enea.it}
		
		\author[Enea]{S. Licciardi \corref{cor}}
		\ead{silvia.licciardi@enea.it}

		\address[Enea]{ENEA - Frascati Research Center, Via Enrico Fermi 45, 00044, Frascati, Rome, Italy}
		\cortext[cor]{Corresponding author}

		\begin{abstract}
			Differintegral methods, currently exploited in calculus, provide a fairly unexhausted source of tools to be applied to a wide class of problems involving the theory of special functions and not only. The use of integral transforms of  Borel type and the associated formalism will be shown to be an effective means, allowing a link between umbral and operational methods.  We merge these two points of view to get a new and efficient method to obtain integrals of special functions and the summation of the associated generating functions as well.
		\end{abstract}

		\begin{keyword}
			Umbral Methods \textbf{05A40,44A99, 47B99}; Operator Theory \textbf{47A62}; Special Functions \textbf{33C52, 33C65,
				33C99, 33B10, 33B15}; Borel Transform \textbf{44A99, 45P05, 47G10}; Integral Calculus  \textbf{97I50}; Gamma Function \textbf{33B15}.
		\end{keyword}
		
	\end{frontmatter}
	
	\section{Introduction}

Operational methods, developed within the context of the fractional derivative formalism \cite{Oldham}, have opened new possibilities in the application of Calculus.  Even classical problems, with well-known solutions, may acquire a different flavor, if viewed within such a perspective which, if properly pursued, may allow further progresses disclosing new avenues for their study and generalizations.\\

\noindent It is indeed well known that the operation of integration is the inverse of the derivation such a statement, by itself, does not enable a formalism to establish rules to handle integrals and derivatives on the same footing. \\

 \noindent An almost natural environment to place this specific issue is the formalism of real order derivatives in which the distinction between integrals and derivatives becomes superfluous. The use of the formalism associated with the fractional order operators offers new computational tools as e.g. the extension of the concept of integration by parts. Within such a context we can prove the following statement \cite{Comp} .

\begin{prop}\label{Prop1}
	The integral of a function $f\in C^\infty$ can be written in terms of the series 
	
	\begin{equation}\label{intfunct}
	F(x):=\int f(x)dx=\sum_{s=0}^{\infty}(-1)^s\dfrac{x^{s+1}}{(s+1)!}f^{(s)}(x), \quad \forall x\in Dom \left\lbrace  f(x)\right\rbrace , 
	\end{equation}
	where $f^{(s)}(x)$ denotes the $s^{th}$-derivative of the integrand function. 
\end{prop}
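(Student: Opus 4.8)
The plan is to verify the claimed identity directly by differentiation: denoting the right-hand side of \eqref{intfunct} by $g(x):=\sum_{s=0}^{\infty}(-1)^s\frac{x^{s+1}}{(s+1)!}f^{(s)}(x)$, it suffices to show that $g'(x)=f(x)$, so that $g$ is an antiderivative of $f$ and hence coincides with $F(x)=\int f(x)\,dx$ up to the usual integration constant. This reduces the proposition to a formal manipulation of the series and sidesteps any need to guess the operational origin of the coefficients $(-1)^s/(s+1)!$.

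First I would differentiate the series term by term, applying the product rule to each summand $x^{s+1}f^{(s)}(x)/(s+1)!$. This produces two contributions,
\[
g'(x)=\sum_{s=0}^{\infty}\frac{(-1)^s}{s!}x^{s}f^{(s)}(x)+\sum_{s=0}^{\infty}\frac{(-1)^s}{(s+1)!}x^{s+1}f^{(s+1)}(x),
\]
where the first sum comes from differentiating the power $x^{s+1}$ and the second from differentiating the derivative factor $f^{(s)}$. Next I would reindex the second sum by setting $t=s+1$, turning it into $-\sum_{t=1}^{\infty}\frac{(-1)^t}{t!}x^{t}f^{(t)}(x)$. Splitting off the $s=0$ term of the first sum, the remaining tail $\sum_{s\ge1}\frac{(-1)^s}{s!}x^{s}f^{(s)}(x)$ cancels exactly against the reindexed second sum, so the whole expression telescopes down to $g'(x)=f(x)$, the desired conclusion.

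The step I expect to be the genuine obstacle is not the algebra — the telescoping is immediate — but the justification of the term-by-term differentiation and the convergence of the defining series, since $f\in C^\infty$ alone does not control the growth of the higher derivatives $f^{(s)}$. I would therefore either impose an analyticity or growth hypothesis ensuring uniform convergence on compact subsets of $Dom\{f\}$, or, in keeping with the operational spirit of the paper, present the identity first as a formal (umbral) statement and then derive the same series rigorously by iterated integration by parts, $\int f\,dx = x f - \int x f'\,dx = x f - \tfrac{x^{2}}{2}f'+\cdots$, whose $n$-th remainder $(-1)^{n+1}\int \frac{x^{n+1}}{(n+1)!}f^{(n+1)}(x)\,dx$ can be shown to vanish under the same hypotheses.
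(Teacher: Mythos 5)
Your argument is correct, but it takes a genuinely different route from the paper. The paper derives \eqref{intfunct} operationally: it writes $\int f\,dx=\hat{D}_x^{-1}\bigl(1\cdot f(x)\bigr)$ and expands via the generalized Leibniz rule for the negative-order derivative, $\hat{D}_x^{-1}(gf)=\sum_{s\ge 0}\binom{-1}{s}g^{(-1-s)}f^{(s)}$, using $\binom{-1}{s}=(-1)^s$ and $\hat{D}_x^{-(1+s)}1=\frac{x^{s+1}}{(s+1)!}$ to read off the coefficients. You instead \emph{verify} the identity a posteriori by term-by-term differentiation of the proposed series, with the telescoping cancellation giving $g'=f$; your computation is right (the $s=0$ term of the first sum survives as $f(x)$ and the tails cancel after reindexing), and since the statement concerns an indefinite integral the additive constant is immaterial. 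What the paper's route buys is an explanation of where the coefficients come from and a template that generalizes (e.g.\ to $g\neq 1$, giving an ``integration by parts to all orders'' formula); what your route buys is elementarity and, via the iterated-integration-by-parts variant $\int f\,dx=xf-\tfrac{x^2}{2}f'+\cdots$ with explicit remainder $(-1)^{n+1}\int\frac{x^{n+1}}{(n+1)!}f^{(n+1)}dx$, a concrete handle on rigor. Indeed you are more careful than the paper itself on the analytic side: the paper's proof is purely formal and never addresses convergence or the legitimacy of interchanging limits, whereas you correctly flag that $f\in C^\infty$ alone does not control the growth of $f^{(s)}$ and that an additional hypothesis (or a remainder estimate) is needed for the series to converge to the antiderivative.
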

\noindent The relevant proof needs a few premises.

\begin{defn}\label{DerNegOp}
	$\forall x\in\mathbb{R}, \forall f\in C^\infty$, let
	\begin{equation*}
	\begin{split}\label{key}
	& \int g(x)f(x)dx= \hat{D}_x^{-1}\left(g(x)f(x )\right), \\
	& \hat{D}_x^{-1}s(x)=\int s(x)dx,\qquad \qquad g(x)=1.
	\end{split}
	\end{equation*}
	where $\hat{D}_x^{-1}$ is the \emph{negative derivative operator} and, for the operation of definite integration, we set
	
	\begin{equation}\label{NDO}
	{}_{\alpha}\hat{D}_x^{-1}s(x)=\int_{\alpha}^{x}s(\xi)d\xi =S(x)-S(a).	
	\end{equation}
\end{defn}

\begin{proof}[\textbf{Proof.}][Proposition \ref{Prop1}]
	We rewrite eq. \eqref{intfunct} according to Definition \ref{DerNegOp} and by the use of the \textit{Leibniz formula}, written as
	
	\begin{equation}\label{negder}
\hat{D}_x^{-1}\left(g(x)f(x )\right)=\sum_{s=0}^{\infty}\binom{-1}{s}g^{(-1-s)}(x)f^{(s)}(x).
	\end{equation}
	We note that, being
	
	\begin{equation}\label{key}
	\binom{-1}{s}=(-1)^s,
	\end{equation}
	 for $g(x)=1$, we get 
	
	\begin{equation}\label{key}
	 g^{(-1-s)}(x)=\hat{D}_x^{(-1-s)}1=\dfrac{x^{s+1}}{(s+1)!}
\end{equation}
thus eventually ending up with

\begin{equation*}\label{key}
\hat{D}_x^{-1}\left(1 \cdot f(x )\right)=\sum_{s=0}^{\infty}(-1)^s\dfrac{x^{s+1}}{(s+1)!}f^{(s)}(x)=\int f(x)dx\;.
\end{equation*}
\end{proof}

The interesting element of such an analytical tool is that it allows the evaluation of the primitive of a function in terms of an automatic procedure, analogous to that used in the calculus of the derivative of a function. At the same time, it marks the conceptual, even though not formal, difference between the two operations. It is implicit in eq. \eqref{NDO} that the the underlying computational procedure involves, most of the times, an infinite number of steps. Eq. \eqref{negder} becomes useful if, e.g., the function $f(x)$ has peculiar properties under the operation of derivation, like being cyclical, vanishing after a number of steps or other.\\

The formalism we have just envisaged can be combined, e.g.,  with the properties of the special polynomials to find useful identities as shown in the following examples.

\begin{exmp}
 In the case of two variable Hermite Kamp\'{e} d\'{e} F\'{e}ri\'{e}t polynomials \cite{Appell}

\begin{equation} \label{GrindEQ__5_} 
H_{n} (x,y)=n!\, \sum _{r=0}^{\lfloor\frac{n}{2} \rfloor}\frac{x^{n-2\, r} y^{r} }{(n-2\, r)!\, r!} ,  \quad \forall x,y\in\mathbb{R}, \forall n\in\mathbb{N}
\end{equation} 
satisfying the properties \cite{Babusci}

\begin{equation}\begin{split} \label{GrindEQ__6_} 
& \partial_x^{s}\; H_{n} (x,y)=\frac{n!}{(n-s)!} H_{n-s} (x,\, y), \quad \forall s\in\mathbb{N}: 0\leq s\leq n, \\ 
& \partial_y^{s}\; H_{n} (x,y)=\frac{n!}{(n-2s)!} H_{n-2s} (x,\, y), \quad \forall s\in\mathbb{N}: 0\leq s\leq \frac{n}{2},
 \end{split} \end{equation} 
we obtain the definite integrals

\begin{equation}\begin{split}\label{key}
& \int H_{n}  (x ,\, y)\, dx\; =
\sum _{s=0}^{n}\frac{(-1)^{s} x^{s+1} }{(s+1)!}  \frac{n!}{\left(n-s\right)\, !} H_{n-s} (x,y)=
\sum _{s=0}^{n} \binom {n} {s}\; \frac{(-1)^{s} x^{s+1}}{(s+1)}    H_{n-s} (x,y), \\ 
&  \int H_{n}  (x ,\, y)\, dy\; =
\sum _{s=0}^{\lfloor\frac{n}{2}\rfloor}\frac{(-1)^{s} y^{s+1}  }{(s+1)!}  \frac{n!}{\left(n-2s\right)\, !} H_{n-2s} (x,y).
\end{split}
\end{equation}
Furthermore, by noting that the repeated integrals of Hermite polinomials with respect to the $x$ variable is 

\begin{equation}\label{IntHs}
H_n^{(-s)\mid_x}(x,y)=\frac{n!}{(n+s)!}H_{n+s}(x,y),
\end{equation}
we find\footnote{It should be noted that the eq. \eqref{IntHs} is consistent with the $s^{th}$ primitive in the sense that 

\begin{equation*}\label{key}
\left( \frac{d}{dx}\right) ^s H_n^{(-s)\mid_x}(x,y)=H_n(x,y).
\end{equation*}
It is evident that such a definition excludes an undetermined polynomial in $x$ of order $s$ originated by the iterated constants emerging in the process of successive integration.}

\begin{equation}\label{key}
\int H_{n}  (x ,\, y)\,H_{m}  (x ,\, y)\, dx\; =\sum_{s=0}^n\frac{(-1)^s \;n!\;m!}{(n-s)!(m+1+s)!}H_{n-s}(x,y)H_{m+1+s}(x,y).
\end{equation}
and 

\begin{equation}\label{key}
\int H_{n}  (x ,y)\,H_{m}  (x , y)x^p\,dx =\sum_{s=0}^{\min(m,n)} \frac{(-1)^s\;p!}{(p+s+1)!}x^{p+s+1}\sum_{r=0}^s\binom{s}{r}\frac{n!\;m!}{(n-r)!(m-(s-r))!}H_{n-r}(x,y)H_{m-(s-r)}(x,y).
\end{equation}
\end{exmp}

\begin{exmp}
Integrals involving the product of Hermite and circular functions 

\begin{equation}\begin{split}\label{key}
& \int H_{n}  \, (x,\, y)\, \cos (x )\, d\, x =-\sum _{s=0}^{n} \cos \left(x+(s+1)\dfrac{\pi }{2} \right) \frac{n!}{(n-s)!}  H_{n-s} (x,y)\, ,\\ 
& \int H_{n}  \, (x,\, y )\, \cos (y )\, d\, y =-\sum _{s=0}^{\lfloor\frac{n}{2}\rfloor}\cos \left(y+(s+1)\dfrac{\pi }{2} \right) \frac{n!}{(n-2s)!}  H_{n-2s} (x,y). 
\end{split}\end{equation}
can be solved or, in the case of definite integrals, we can find

\begin{equation}\begin{split}\label{key}
& \int_a^x H_{n}  \, (\xi\,, y)\, \cos (\xi)\, d\, \xi =F(x)-F(a)\\ 
& \int_a^y H_{n}  \, (x,\, \eta )\, \cos (\eta )\, d\, \eta =F(y)-F(a),\\
& F(x):=\int f(x)\;dx
 \end{split}\end{equation}
which, for $a=0$, specializes as

\begin{equation}\begin{split}\label{key}
\int_a^x H_{n}  \, (\xi\,, y)\, \cos (\xi)\, d\, \xi & 
=-\sum _{s=0}^{n} \cos \left(x+(s+1)\dfrac{\pi }{2} \right) \frac{n!}{(n-s)!}  H_{n-s} (x,y)- n!\left| \cos\left( (n+1)\frac{\pi}{2}\right) \right|(-1)^{\lfloor\frac{n-1}{2}\rfloor}e_{\lfloor\frac{n-1}{2}\rfloor}(-y),\\
  e_m(x)&=\sum_{r=0}^{m}  \frac{x^r}{r! } \; truncated\;exponential\;function,\\
\int_a^y H_{n}  \, (x,\, \eta )\, \cos (\eta )\, d\, \eta 
& =-\sum _{s=0}^{\lfloor\frac{n}{2}\rfloor} \cos \left(y+(s+1)\dfrac{\pi }{2} \right) \frac{n!}{(n-2s)!}  H_{n-2s} (x,y)-n! (-1)^{-\frac{n-2}{4}}\;{}_{[4]} e_{n-2}\left((-1)^{\frac{1}{4}}x \right), \\
  {}_{[k]}e_m(x)&=\sum_{r=0}^{\lfloor\frac{m}{k}\rfloor}  \frac{x^{m-kr}}{(m-kr)! } \; truncated\;exponential\;function\;of \;order\; k
 \end{split}\end{equation}
\end{exmp}

\begin{exmp}
$\forall a,b,x\in\mathbb{R}	$ we find
	\begin{equation}\label{intexpH}
	\int e^{\;a\, x ^{2} +b\, x }  \, dx =\sum _{s=0}^{\infty }\frac{x^{s+1} }{(s+1)!}  H_{s} (-2\, a\, x-b,\, a)\, e^{\;a\, x^{2} +b\, x}.
	\end{equation} 
The proof is obtained  by taking into account that

\begin{equation} \label{GrindEQ__8_} 
\partial_x ^{s}\; e^{\;a\, x^{2} +b\, x} =H_{s} (2\, a\, x+b,\, a)\, e^{\;a\, x^{2} +b\, x}  
\end{equation} 
and knowing that $(-1)^nH_n(x,y)=H_n(-x,y)$ \cite{Babusci}.\\

\noindent Furthermore, by using Proposition \eqref{Prop1}, we obtain integrals involving products of a Gaussian and a cosine

\begin{equation}\label{key}
 \int e^{\;a\, x ^{2} +b\, x }  \, \cos (x )\, dx =\sum _{s=0}^{\infty }
(-1)^s \frac{x^{s+1}}{(s+1)!}\;F_s(x;a,b)\;e^{\;ax^2+bx},
\end{equation}
where 
\begin{equation}\label{key}
 F_n(x;a,b):=\sum_{r=0}^n \binom{n}{r}\;H_r(2ax+b,a)\cos\left(x+(n-r)\frac{\pi}{2} \right)
\end{equation} 
or, in a more compact form,

\begin{equation}\label{key}
F_n(x;a,b):=Re\left(e^{inx} H_n(2ax+b+i,a)\right). 
\end{equation}
\end{exmp}

 These are just few examples to introduce the flexibility of the method, in the forthcoming section we will combine this formalism with others of umbral and operational nature to get further tools to strength the relevant computational capabilities.

\section{Umbral Methods and the Negative Derivative Formalism}\label{UMNDF}

 In a numbers of previous papers \cite{On Ramanujan, Winternitz,DBabusci,SLicciardi} it has been established that the umbral image of a Bessel function is a Gaussian. This statement can be profitably exploited within the present context, provided we premise some concepts \cite{SLicciardi, S.Roman}.
 
 \begin{defn}
 	The function
 	
 	\begin{equation}\label{key}
 	\varphi(\nu):=\varphi_\nu=\frac{1}{\Gamma (\nu +1)}, \quad \forall \nu\in\mathbb{R}.
 	\end{equation}
 	is called umbral ``vacuum".
 \end{defn}
 
 \begin{defn}
 	We introduce the operator $\hat{c}$ called ``umbral",
 	
 \begin{equation}\label{key}
 \hat{c}:=e^{\partial_z},
 \end{equation}	
as  the vacuum shift operator, with $z$ the domain's variable of the function on which the operator acts.
 \end{defn}

\begin{thm}
	The umbral operator $\hat{c}^\nu$, $\forall \nu\in\mathbb{R}$, is the action of the operator $\hat{c}$ on the vacuum $\varphi_{0}$ such that\footnote{See ref. \cite{SLicciardi} for a rigorous treatment of the umbral concepts.}
	
\begin{equation}\label{key}
	\hat{c}^{\nu } \varphi _{0} =\frac{1}{\Gamma (\nu +1)}. 
\end{equation}
\end{thm}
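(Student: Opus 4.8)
The plan is to reduce the statement to the elementary action of the Lagrange (exponential-shift) operator on the reciprocal Gamma function. First I would use $\hat{c}=e^{\partial_z}$ to write, for real $\nu$,
\begin{equation*}
\hat{c}^{\nu}=\left(e^{\partial_z}\right)^{\nu}=e^{\nu\,\partial_z},
\end{equation*}
which is unambiguous precisely because the power passes through the exponential, so that no branch of a fractional power is involved. Here the vacuum, read as a function of its index, is $\varphi(z)=1/\Gamma(z+1)$, and in the symbol $\hat{c}^{\nu}\varphi_{0}$ the subscript $0$ marks the reference value of the argument that is assigned \emph{after} the operator has acted; this is the content of the ``vacuum-shift'' convention and is what prevents the naive reading in which $\varphi_{0}\equiv1$ would be annihilated by $\partial_z$.

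Next I would invoke the shift-operator identity
\begin{equation*}
e^{\nu\,\partial_z}\,g(z)=\sum_{k=0}^{\infty}\frac{\nu^{k}}{k!}\,g^{(k)}(z)=g(z+\nu),
\end{equation*}
which is just the Taylor expansion of $g$ about $z$, and apply it to $g=\varphi$ with base point $z=0$. Carrying out the shift first and only then setting the reference argument to $0$ gives
\begin{equation*}
\hat{c}^{\nu}\varphi_{0}=e^{\nu\,\partial_z}\varphi(z)\big|_{z=0}=\varphi(0+\nu)=\varphi_{\nu}=\frac{1}{\Gamma(\nu+1)},
\end{equation*}
so that $\hat{c}^{\nu}$ simply promotes the vacuum index from $0$ to $\nu$, which is exactly the assertion.

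The single delicate point, and the main obstacle, is to justify the shift identity for an arbitrary real $\nu$ rather than for a merely formal or integer displacement. I would dispatch it by recalling that $1/\Gamma(z+1)$ is an entire function, so its Taylor series about $z=0$ has infinite radius of convergence and $\sum_{k\ge0}\nu^{k}\varphi^{(k)}(0)/k!$ converges to $\varphi(\nu)$ for every $\nu\in\mathbb{R}$. This legitimizes applying $e^{\nu\partial_z}$ term by term and interchanging summation with evaluation at $z=0$, which closes the argument. As a consistency check one may specialize to integers: since $\hat{c}\,\varphi_{\mu}=\varphi_{\mu+1}$, iterating $n$ times from the vacuum returns $\varphi_{n}=1/n!$, matching the $\nu=n$ case.
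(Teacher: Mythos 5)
Your argument is correct and is precisely the computation the paper intends: the paper itself gives no proof of this theorem (it defers to the cited reference in the footnote), but its definitions of $\hat{c}=e^{\partial_z}$ as the vacuum-shift operator and of $\varphi_\nu=1/\Gamma(\nu+1)$ make the Taylor/shift identity $e^{\nu\partial_z}\varphi(z)\big|_{z=0}=\varphi(\nu)$ the only natural route, which is exactly what you carry out. Your added observation that $1/\Gamma(z+1)$ is entire, so the shift is legitimate for every real $\nu$ and not just formally, supplies a justification the paper leaves implicit.
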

\noindent Through these premises, we define the correspondence quoted below.

\begin{prop}
	$\forall x,\nu\in\mathbb{R}$, the umbral image of a $0$-order Bessel function is a Gaussian function
	
\begin{equation}\label{GrindEQ__10_} 
J_0(x)=e^{-\hat{c}\, \left(\frac{x}{2} \right)^{2} } \varphi _{0} .
\end{equation} 	
\end{prop}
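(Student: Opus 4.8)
The plan is to prove eq. \eqref{GrindEQ__10_} by expanding the umbral exponential as a formal power series in $\hat{c}$ and then invoking the defining action $\hat{c}^{\nu}\varphi_{0}=1/\Gamma(\nu+1)$ term by term. First I would recall the Taylor series of the zero-order Bessel function,
\begin{equation*}
J_{0}(x)=\sum_{k=0}^{\infty}\frac{(-1)^{k}}{(k!)^{2}}\left(\frac{x}{2}\right)^{2k},
\end{equation*}
which follows by specializing the standard representation $J_{\nu}(x)=\sum_{k\ge 0}\frac{(-1)^{k}}{k!\,\Gamma(k+\nu+1)}(x/2)^{2k+\nu}$ at $\nu=0$ and using $\Gamma(k+1)=k!$.

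Next I would expand the right-hand side of eq. \eqref{GrindEQ__10_}. Since $\hat{c}=e^{\partial_{z}}$ acts only on the vacuum variable, it commutes with the ordinary variable $x$, so the exponential can be written as
\begin{equation*}
e^{-\hat{c}\,(x/2)^{2}}\varphi_{0}=\sum_{k=0}^{\infty}\frac{(-1)^{k}}{k!}\left(\frac{x}{2}\right)^{2k}\hat{c}^{\,k}\varphi_{0}.
\end{equation*}
The key step is then to apply the fundamental property $\hat{c}^{\,k}\varphi_{0}=1/\Gamma(k+1)=1/k!$ to each summand, which collapses the generic term to $\frac{(-1)^{k}}{(k!)^{2}}(x/2)^{2k}$ and thereby reproduces exactly the Bessel series obtained in the first step. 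Matching the two power series gives the claimed identity.

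The main obstacle is not the algebra, which is elementary, but the legitimacy of treating $\hat{c}$ as a commuting scalar symbol inside the exponential and evaluating it on the vacuum only \emph{after} the series expansion. I would justify this by appealing to the rigorous umbral framework of ref. \cite{SLicciardi}, in which $\hat{c}$ is realized as a shift operator acting on the real index $\nu$ of $\varphi_{\nu}$, so that the formal series manipulation and the term-by-term vacuum evaluation are well defined and the resulting expansion converges for every real $x$. Once this interpretive point is granted, the coincidence of the two series completes the proof.
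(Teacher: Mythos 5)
Your proposal is correct and follows essentially the same route as the paper: expand $e^{-\hat{c}(x/2)^{2}}$ as a power series, apply $\hat{c}^{\,r}\varphi_{0}=1/\Gamma(r+1)=1/r!$ term by term, and recognize the resulting series as $J_{0}(x)$. Your additional remark on the legitimacy of deferring the vacuum evaluation is a welcome precision, but it does not change the argument.
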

 
 \begin{proof}
 $\forall x,\nu\in\mathbb{R}$, let
 
 \begin{equation}\label{key}
 u(x)=e^{-\hat{c}\, \left(\frac{x}{2} \right)^{2} } \varphi _{0}, 
 \end{equation}	
 the relevant series expansion proofs  that $u(x)=J_0(x)$, indeed

\begin{equation} \label{GrindEQ__11_} 
 u(x)=e^{-\hat{c}\, \left(\frac{x}{2} \right)^{2} } \varphi _{0} =\sum _{r=0}^{\infty }\frac{(-\hat{c})^{r} }{r!}  \left(\frac{x}{2} \right)^{2\, r} \varphi _{0}  =\sum _{r=0}^{\infty }\frac{(-1)^{r} }{\left(r!\right)^{2} }  \left(\frac{x}{2} \right)^{2\, r} =J_{0} (x).
\end{equation} 
\end{proof}

\begin{cor}
The $n^{th}$ order counterparts are, within the present context, defined as

\begin{equation} \label{GrindEQ__12_} 
J_{n} (x)=\left(\hat{c}\, \frac{x}{2} \right)^{n} e^{-\hat{c}\, \left(\frac{x}{2} \right)^{2} } \varphi _{0} =\sum _{r=0}^{\infty }\frac{\left(-1\right)^{r} }{(n+r)!\, r!}  \left(\frac{x}{2} \right)^{n+2\, r}  .
\end{equation} 
\end{cor}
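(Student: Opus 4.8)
The plan is to mirror the proof just given for $J_0(x)$, treating $\hat{c}$ as an operator acting on the vacuum index while the ordinary variable $x$ is a passive scalar that commutes with it. First I would observe that in the product $\left(\hat{c}\,\frac{x}{2}\right)^{n}$ the factor $\frac{x}{2}$ is a number with respect to the umbral action, so it pulls out of the operator string, giving $\left(\frac{x}{2}\right)^{n}\hat{c}^{n}$. This reduces the claim to computing $\left(\frac{x}{2}\right)^{n}\hat{c}^{n}e^{-\hat{c}\,(x/2)^{2}}\varphi_{0}$.

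Next I would expand the Gaussian exponential in its power series, exactly as in eq.~\eqref{GrindEQ__11_}, and interchange the (formally commuting) operator factors. The key algebraic move is the composition rule $\hat{c}^{n}\hat{c}^{r}=\hat{c}^{n+r}$, which follows at once from the definition $\hat{c}=e^{\partial_{z}}$ as a shift operator. Carrying this out yields
\begin{equation*}
\left(\frac{x}{2}\right)^{n}\hat{c}^{n}\,e^{-\hat{c}\,\left(\frac{x}{2}\right)^{2}}\varphi_{0}
=\sum_{r=0}^{\infty}\frac{(-1)^{r}}{r!}\left(\frac{x}{2}\right)^{n+2r}\hat{c}^{n+r}\varphi_{0}.
\end{equation*}

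At this point I would invoke the fundamental umbral action $\hat{c}^{\nu}\varphi_{0}=1/\Gamma(\nu+1)$ with $\nu=n+r$, so that $\hat{c}^{n+r}\varphi_{0}=1/(n+r)!$. Substituting gives precisely
\begin{equation*}
\sum_{r=0}^{\infty}\frac{(-1)^{r}}{(n+r)!\,r!}\left(\frac{x}{2}\right)^{n+2r},
\end{equation*}
which is the standard ascending series of the Bessel function $J_{n}(x)$, establishing both equalities in the statement.

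The step I expect to require the most care is the interchange of the (infinite) summation with the operator factor $\hat{c}^{n}$ and the justification that $\frac{x}{2}$ commutes past $\hat{c}$: this is legitimate only because $\hat{c}$ acts on the vacuum label $\nu$ and not on $x$, so all the manipulations are formal identities among the coefficients of the generating series. Everything else is the routine recombination of factorials, so once the operator bookkeeping is made rigorous—precisely the point deferred to ref.~\cite{SLicciardi}—the identity is immediate.
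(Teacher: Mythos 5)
Your proof is correct and follows exactly the route the paper itself uses (implicitly, by direct analogy with the series expansion for $J_0$ in eq. \eqref{GrindEQ__11_}): expand the exponential, combine the umbral powers via $\hat{c}^{n}\hat{c}^{r}=\hat{c}^{n+r}$, and evaluate on the vacuum with $\hat{c}^{\nu}\varphi_{0}=1/\Gamma(\nu+1)$. No discrepancies to report.
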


Such a restyling has allowed noticeable simplifications, concerning the handling of integrals of Bessel functions and of many other problems associated with the use of the Ramanujan Master Theorem (RMT) \cite{Ramanujian}. Furthermore, the same formalism provides the possibility of viewing at the relevant theory by using fairly elementary algebraic tools \cite{SLicciardi}. Without entering in the details of the proof of the RMT, we note that it can be formulated as a kind of general rule, which will be stated as it follows.

\begin{thm}\label{thmcorr}
	 If an umbral correspondence is established between two different functions, such a correspondence can be extended to other operations including derivatives and integrals.
	\end{thm}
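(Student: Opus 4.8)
The plan is to read the theorem as the assertion that the umbral evaluation $\hat{c}^{\nu}\varphi_{0}=1/\Gamma(\nu+1)$ defines a linear functional that commutes with every operation acting on the domain variable, so that such operations ``pass through'' the correspondence. First I would fix the meaning of an umbral correspondence: I say that a function $f(x)$ corresponds to an ``image'' $\hat{f}$ if $f(x)=\hat{f}(\hat{c},x)\,\varphi_{0}$, in the sense that the right-hand side, expanded in powers of $\hat{c}$ and evaluated through $\hat{c}^{r}\varphi_{0}=1/\Gamma(r+1)$, reproduces the series of $f$; the Bessel--Gaussian pair $J_{0}(x)=e^{-\hat{c}(x/2)^{2}}\varphi_{0}$ of eq. \eqref{GrindEQ__11_} is the prototype to keep in mind.

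The central observation is that the umbral operator $\hat{c}=e^{\partial_{z}}$ acts exclusively on the vacuum variable $z$, whereas the derivatives and integrals named in the statement act on the independent variable $x$. Hence $\hat{c}$ is an inert parameter for any operation $\mathcal{O}_{x}$ on the $x$-variable (a derivative $\partial_{x}^{k}$, an ordinary integral $\int\,dx$, or the negative-derivative operator ${}_{\alpha}\hat{D}_{x}^{-1}$ of eq. \eqref{NDO}), and the two commute. I would therefore establish the extension property in the form
\begin{equation*}
\mathcal{O}_{x}\,f(x)=\mathcal{O}_{x}\bigl[\hat{f}(\hat{c},x)\,\varphi_{0}\bigr]=\bigl[\mathcal{O}_{x}\,\hat{f}(\hat{c},x)\bigr]\varphi_{0},
\end{equation*}
so that the operation is first carried out on the simpler image $\hat{f}$, with $\hat{c}$ frozen as a parameter, and the vacuum is applied only at the end. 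For a derivative this is immediate term by term on the power series; for an integral the inner computation $\int\hat{f}(\hat{c},x)\,dx$ reduces to an elementary $x$-integration with parameter $\hat{c}$ (a Gaussian integral in the Bessel case), whose closed form, re-expanded in $\hat{c}$ and acted on $\varphi_{0}$, returns the integral of $f$. This is precisely the mechanism by which the RMT trades a series in $x$ for $\Gamma$-values at shifted arguments.

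The step I expect to be the main obstacle is the justification of the interchange between $\mathcal{O}_{x}$ and the vacuum evaluation, that is, the term-by-term passage $\mathcal{O}_{x}\sum_{r}c_{r}(x)\,\hat{c}^{r}\varphi_{0}=\sum_{r}\bigl(\mathcal{O}_{x}c_{r}(x)\bigr)\hat{c}^{r}\varphi_{0}$. Within the domain of convergence of the image series, term-by-term differentiation and integration of power series are standard, so the real difficulty sits in the real-order evaluation $\hat{c}^{\nu}\varphi_{0}=1/\Gamma(\nu+1)$, which is the analytic continuation from integer to non-integer indices underlying the RMT. I would close the argument by checking that this continuation is compatible with $\mathcal{O}_{x}$ --- equivalently, that performing the elementary operation on $\hat{f}$ before applying $\varphi_{0}$ agrees with the direct series manipulation of $f$ --- and by invoking the convergence hypotheses that render the Borel-type resummation of the resulting $\hat{c}$-series legitimate, consistently with the formalism developed in the sections that follow.
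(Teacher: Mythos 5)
The paper does not actually prove this theorem: it is introduced with the words ``without entering in the details of the proof of the RMT, we note that it can be formulated as a kind of general rule,'' stated as a \emph{Principle of permanence of the formal properties}, and then substantiated only by worked examples (eqs. \eqref{GrindEQ__13_}, \eqref{intJ0}, \eqref{GrindEQ__15_}), with the rigorous treatment deferred to the cited thesis \cite{SLicciardi}. Your proposal therefore goes a genuinely different route, in that it supplies an argument where the paper supplies none: you isolate the correct mechanism, namely that $\hat{c}=e^{\partial_z}$ acts on the vacuum variable $z$ while derivatives and integrals act on $x$, so the two commute and one may operate on the umbral image with $\hat{c}$ frozen as a parameter, applying $\varphi_0$ only at the end. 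This is exactly how every example in the paper is computed, so your sketch is a faithful formalization of the paper's practice. You also correctly locate the two genuine analytic obstructions --- the term-by-term interchange of $\mathcal{O}_x$ with the $\hat{c}$-series, and the legitimacy of the real-order evaluation $\hat{c}^{\nu}\varphi_0=1/\Gamma(\nu+1)$ --- which the paper itself leaves open and even concedes later (``we have interchanged Borel operators and series summation without taking too much caution,'' cf. the discussion around eqs. \eqref{GrindEQ__28_}--\eqref{GrindEQ__30_}). What your approach buys is a precise statement of what would need to be checked for the principle to be a theorem rather than a heuristic; what the paper's approach buys is brevity and flexibility, at the cost of leaving the statement as a working rule validated case by case. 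Be aware that as written your argument is still a programme rather than a complete proof: the interchange step is asserted ``within the domain of convergence'' but the most interesting applications (e.g. $\int_0^\infty J_0(x)\,dx$) involve integrals over unbounded domains where the termwise-integrated series diverges, so the closing step you defer to ``convergence hypotheses'' is precisely where a full proof would have to do real work.
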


\begin{exmp}
 According to the Theorem \ref{thmcorr}, since an umbral correspondence between Gaussian and Bessel functions has been established, we can state further identities deriving from such identification. The well known Gaussian integral $\int _{-\infty }^{\infty }e^{-a\, x^{2} }  dx=\sqrt{\frac{\pi }{a} }$ can be exploited to get an analogous result for the Bessel function

\begin{equation} \begin{split}\label{GrindEQ__13_} 
&  \int _{0}^{\infty }J_{0}  (x)\, dx=\int _{0}^{\infty }e^{-\hat{c}\, \left(\frac{x}{2} \right)^{2} } dx \, \varphi _{0} =
\dfrac{1}{2}\sqrt{\frac{4\pi }{\hat{c}} } \varphi _{0}  =\sqrt{\pi } \hat{c}^{-\frac{1}{2} } \varphi _{0} =\sqrt{\pi } \frac{1}{\Gamma \left(1-\frac{1}{2} \right)} =1, \\ 
& \Gamma (x)=\int _{0}^{\infty }e^{-t}  t^{x-1} dt,\, \, \quad Re(x)>0 .
 \end{split} \end{equation} 
The use of the same rule, a kind of \emph{Principle of permanence of the formal properties},
 yields the further identity \cite{D.Babusci,nopubl} 

\begin{equation}\label{intJ0}
 \int _{0}^{\infty }J_{0}  (x)\, x^{\nu -1} dx=\int _{0}^{\infty }e^{-\hat{c}\, \left(\frac{x}{2} \right)^{2} }  x^{\nu -1} dx\, \varphi _{0} =2^{\nu -1} \frac{\Gamma \left(\frac{\nu }{2} \right)}{\Gamma \left(1-\frac{\nu }{2} \right)} ,\qquad 0< \nu<\frac{3}{2}
\end{equation}
 and 

\begin{equation}\label{key}
\int _{0}^{\infty }J_{0}  (x^{2} )\, dx=4^{-\frac{3}{4} } \frac{\Gamma \left(\frac{1}{4} \right)}{\Gamma \left(\frac{3}{4} \right)}.
\end{equation}
\end{exmp}

\begin{exmp}
Other infinite integrals can be obtained by a judicious use of the same ``principle'', therefore we find

\begin{equation} \begin{split}\label{GrindEQ__15_} 
 \int _{0}^{\infty }e^{-x^{2} }  J_{0} (bx)\, dx&=\int _{0}^{\infty }e^{-x^{2} }  e^{-\hat{c}\, \left(\frac{b\, x}{2} \right)^{2} } \, dx\, \varphi _{0} =\frac{\sqrt{\pi } }{2} \frac{1}{\sqrt{1+\frac{b^{2} }{4} \hat{c}} } \varphi _{0}  =
\frac{ \sqrt{\pi }}{2}  \sum _{r=0}^{\infty }\binom{-\frac{1}{2} }{r} \, \left(\frac{b}{2} \right)^{2r} \hat{c}^{r} \varphi _{0} =\\
 & =\frac{\pi }{2} \sum _{r=0}^{\infty }\frac{1}{\, \Gamma \left(\frac{1}{2} -r\right)\, (r!)^{2} }  \, \, \left(\frac{b}{2} \right)^{2r} .
 \end{split}
  \end{equation} 
 \end{exmp}

\begin{prop}
	$\forall x\in\mathbb{R}$, $\forall n\in\mathbb{N}$, we set
	
	\begin{equation}\label{key}
	\int _{0}^{x}J_{0}  \, (\xi )\, d\, \xi =\sum _{s=0}^{\infty }\frac{x^{s+1} }{(s+1)}  \left(\sum _{r=0}^{\lfloor\frac{s}{2}\rfloor }\frac{(-1)^{r} }{(2x)^r\;r!\, (s-2\, r)!}  J_{s-r} (x)\right).
	\end{equation}
\end{prop}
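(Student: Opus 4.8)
The plan is to apply Proposition \ref{Prop1} to the integrand $f(x)=J_0(x)$, so that the whole problem reduces to computing the $s$-th derivative $J_0^{(s)}(x)$ in a form that displays Bessel functions. Since the integral requested is definite, I would first invoke Definition \ref{DerNegOp}, writing $\int_0^x J_0(\xi)\,d\xi = F(x)-F(0)$ with $F$ the primitive supplied by eq. \eqref{intfunct}. Each summand of $F$ carries a factor $x^{s+1}$, and after the reduction below each term behaves like $x^{s+1-r}$ with $r\le\lfloor s/2\rfloor<s+1$; hence $F(0)=0$ and it suffices to identify $F(x)$ with the claimed series.

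The central step is the evaluation of $J_0^{(s)}(x)$ through the umbral image \eqref{GrindEQ__10_}, $J_0(x)=e^{-\hat c (x/2)^2}\varphi_0$. Reading $e^{-\hat c (x/2)^2}$ as the Gaussian $e^{ax^2+bx}$ with $a=-\hat c/4$, $b=0$ and using the Hermite derivative rule \eqref{GrindEQ__8_}, I would obtain
\begin{equation*}
J_0^{(s)}(x)=H_s\!\left(-\tfrac{\hat c\,x}{2},\,-\tfrac{\hat c}{4}\right)e^{-\hat c (x/2)^2}\varphi_0 .
\end{equation*}
Expanding the Hermite polynomial by its explicit form \eqref{GrindEQ__5_}, collecting the powers of $\hat c$ into a single factor $\hat c^{\,s-r}$, and then exploiting the higher-order umbral definition \eqref{GrindEQ__12_} in the guise $\hat c^{\,n}e^{-\hat c (x/2)^2}\varphi_0=(2/x)^n J_n(x)$, the operator $\hat c$ is removed and each term turns into a genuine Bessel function $J_{s-r}(x)$. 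A careful bookkeeping of the powers of $2$ and $x$ then collapses everything to
\begin{equation*}
J_0^{(s)}(x)=(-1)^s\,s!\sum_{r=0}^{\lfloor s/2\rfloor}\frac{(-1)^r}{(2x)^r\,r!\,(s-2r)!}\,J_{s-r}(x).
\end{equation*}

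Finally, I would substitute this expression into eq. \eqref{intfunct}: the two sign factors combine as $(-1)^s(-1)^s=1$, while $s!/(s+1)!=1/(s+1)$, and the double series rearranges precisely into the stated right-hand side. The main obstacle, and the part requiring genuine care, is the manipulation in the central step, namely treating the umbral symbol $\hat c$ formally as a parameter inside the Gaussian derivative and Hermite formulas (justified by Theorem \ref{thmcorr} and the principle of permanence of formal properties), and then tracking the exponents of $\hat c$, $x$ and $2$ so that the reduction $\hat c^{\,n}e^{-\hat c (x/2)^2}\varphi_0\mapsto (2/x)^n J_n(x)$ applies cleanly. The limit analysis ensuring $F(0)=0$ and the term-by-term legitimacy of the resulting series are comparatively routine once this algebra is in place.
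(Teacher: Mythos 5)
Your proposal is correct and follows essentially the same route as the paper: both compute $J_0^{(s)}(x)$ by differentiating the umbral Gaussian $e^{-\hat c (x/2)^2}\varphi_0$ via the Hermite rule, reduce $\hat c^{\,s-r}e^{-\hat c(x/2)^2}\varphi_0$ to $(2/x)^{s-r}J_{s-r}(x)$, and substitute the result into eq.~\eqref{intfunct}. Your additional remark verifying $F(0)=0$ is a small but welcome supplement that the paper leaves implicit.
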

\begin{proof}
We use of the properties of the Gaussian functions under repeated derivatives, as e.g. the fact that

\begin{equation} \label{GrindEQ__16_} 
\left(\frac{d}{dx} \right)^{n} e^{a\, x^{2} } =H_{n} (2\, a\, x,a)\, e^{a\, x^{2} }  ,
\end{equation} 
which allows the derivation of its Bessel-umbral counterpart identity \cite{Babusci}

\begin{equation}\label{key}
\left(\frac{d}{dx} \right)^{n} J_{0} (x)=\left(\frac{d}{dx} \right)^{n} e^{-\hat{c}\, \left(\frac{x}{2} \right)^{2} } \varphi _{0} =H_{n} \left( -\, \hat{c}\, \frac{x}{2} ,-\frac{\hat{c}}{4} \right)  e^{-\hat{c}\, \left(\frac{x}{2} \right)^{2} } \varphi _{0}=
(-1)^{n} n!\sum _{r=0}^{\lfloor\frac{n}{2} \rfloor}\frac{(-1)^{r} }{(2x)^r\;r!\, (n-2\, r)!}  J_{n-r} (x). 
\end{equation}
So, we can ``translate'' the identity \eqref{intfunct} as

\begin{equation*}\label{key}
\int _{0}^{x}J_{0}  \, (\xi )\, d\, \xi =\sum _{s=0}^{\infty }(-1)^s\frac{x^{s+1}}{(s+1)!}\partial_{x}^{(s)}J_0(x)=\sum _{s=0}^{\infty }\frac{x^{s+1} }{(s+1)}  \left(\sum _{r=0}^{\lfloor\frac{s}{2}\rfloor }\frac{(-1)^{r} }{(2x)^r\;r!\, (s-2\, r)!}  J_{s-r} (x)\right).
\end{equation*}
\end{proof}

\begin{obs}
The extension of such a point of view to the theory of Hankel transform \cite{Prudnikov} may be particularly illuminating. Limiting ourselves to the $0$-order case we set

\begin{equation} \label{GrindEQ__19_} 
H_{0} (f(r))(s)=\int _{0}^{\infty }r\, f(r)\, J_{0} (sr)\, dr =\int _{0}^{\infty }r\, f(r)\, e^{-\, \hat{c}\, \left(\frac{s\, r}{2} \right)^{2} } \, dr \, \varphi _{0}  
\end{equation} 
therefore, in the case of the Hankel transform of the function $f(r)=\frac{e^{-r^{2} } }{r} $, we obtain

\begin{equation} \label{GrindEQ__20_} 
H_{0} \left(\frac{e^{-r^{2} } }{r} \right)(s)=\left(\int _{0}^{\infty }e^{-\left(1+\frac{s^{2} }{4} \hat{c}\right)\, r^{2} } \, dr \right)\, \varphi _{0} =\frac{\sqrt{\pi } }{2} \frac{1}{\sqrt{1+\frac{s^{2} }{4} \hat{c}} } \varphi _{0},  
\end{equation} 
thus reducing the problem to the integral evaluated in eq. \eqref{GrindEQ__15_}.
\end{obs}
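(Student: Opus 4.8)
The plan is to establish the two displayed identities in the Observation by reducing the Hankel transform to the Gaussian integral already computed in \eqref{GrindEQ__15_}. First I would recall the classical definition of the zeroth-order Hankel transform, $H_{0}(f(r))(s)=\int_{0}^{\infty} r\, f(r)\, J_{0}(sr)\, dr$, and insert the umbral image of the Bessel function from \eqref{GrindEQ__10_}, namely $J_{0}(sr)=e^{-\hat{c}\,(sr/2)^{2}}\varphi_{0}$. Since the umbral operator $\hat{c}$ acts only on the vacuum $\varphi_{0}$ and is inert with respect to the integration variable $r$, it commutes with $\int_{0}^{\infty}(\cdot)\,dr$; pulling it outside the integral yields precisely the operator-valued representation \eqref{GrindEQ__19_}.

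Next I would specialize to $f(r)=e^{-r^{2}}/r$. The decisive simplification is that the measure factor $r$ coming from the Hankel kernel exactly cancels the $1/r$ in $f$, so that $r\,f(r)=e^{-r^{2}}$ and the integrand collapses to a pure Gaussian, $e^{-r^{2}}e^{-\hat{c}(sr/2)^{2}}=e^{-\left(1+\frac{s^{2}}{4}\hat{c}\right)r^{2}}$. Treating $\hat{c}$ as a formal parameter and using $\int_{0}^{\infty}e^{-A r^{2}}dr=\frac{1}{2}\sqrt{\pi/A}$ with $A=1+\frac{s^{2}}{4}\hat{c}$, I obtain $\frac{\sqrt{\pi}}{2}\bigl(1+\frac{s^{2}}{4}\hat{c}\bigr)^{-1/2}\varphi_{0}$, which is the expression displayed in \eqref{GrindEQ__20_}. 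This is structurally identical to the right-hand side of \eqref{GrindEQ__15_} after the replacement $b\mapsto s$, so the problem is indeed reduced to an already-evaluated integral: expanding the operator by the binomial series and applying $\hat{c}^{r}\varphi_{0}=1/\Gamma(r+1)$ term by term reproduces the closed series found there.

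The step that carries all the analytic weight is the interchange that lets me treat the umbral symbol $\hat{c}$ as an ordinary positive parameter, both when evaluating the improper Gaussian integral and when expanding the fractional power $\bigl(1+\frac{s^{2}}{4}\hat{c}\bigr)^{-1/2}$ as a convergent binomial series \emph{before} letting it act on $\varphi_{0}$. This is not justified by elementary manipulation of operators; rather it is exactly the content of the principle of permanence of the formal properties encoded in Theorem~\ref{thmcorr}. The rigorous safeguard is that, once the binomial expansion is performed and the vacuum is applied, every term becomes $\hat{c}^{r}\varphi_{0}=1/\Gamma(r+1)=1/r!$, so the resulting series is absolutely convergent and coincides with the classical value of the integral. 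Hence the only genuine obstacle is foundational, namely ensuring that the umbral correspondence survives passage through an improper integral and a fractional operator power, and it is dispatched by appeal to the correspondence theorem rather than by direct estimation.
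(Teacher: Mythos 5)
Your proposal is correct and follows essentially the same route as the paper: substitute the umbral image $J_{0}(sr)=e^{-\hat{c}(sr/2)^{2}}\varphi_{0}$, observe that $r\cdot\frac{e^{-r^{2}}}{r}=e^{-r^{2}}$ collapses the integrand to a Gaussian in $r$, evaluate $\int_{0}^{\infty}e^{-Ar^{2}}dr$ with $A=1+\frac{s^{2}}{4}\hat{c}$ treated as a constant, and identify the result with eq.~\eqref{GrindEQ__15_} under $b\mapsto s$. Your added remarks on why $\hat{c}$ may be treated as an ordinary parameter and on the absolute convergence of the resulting series go beyond what the paper writes but do not change the argument.
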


\begin{exmp}
 Further integral transforms can be framed within the same context, e.g. the well-known identities
 
\begin{equation} \begin{split}\label{GrindEQ__21_} 
& \int _{0}^{\infty }J_{0}  (2\sqrt{x\, u} )\, \sin (u)du=\cos (x), \\ 
& \int _{0}^{\infty }J_{0}  (2\sqrt{x\, u} )\, \cos (u)du=\sin (x).
\end{split} \end{equation} 
They can be stated quite easily by noting that 

\begin{equation}\label{eq30}
J_{0} (2 \sqrt{x} )=C_{0} (x)=\sum _{r=0}^{\infty }\frac{(-x)^{r} }{\left(r!\right)^{2} }  
\end{equation}  
and by establishing the umbral correspondence

\begin{equation}\label{key}
C_0(x)=e^{-\hat{c}x}\varphi_{0}
\end{equation}
which allows the following handling of the integrals in eq. \eqref{GrindEQ__21_} 
\begin{equation} \label{GrindEQ__22_} 
\int _{0}^{\infty }J_{0}  (2\sqrt{x\, u} )\, e^{i\, u} du=\int _{0}^{\infty }e^{-(\hat{c}\, x-i)\, u}  du\, \varphi _{0} =\frac{1}{\hat{c}\, x-i} \varphi _{0} =i\, \sum _{r=0}^{\infty }(-i\, \hat{c}\, x)^{r}  \varphi _{0} =i\, e^{-i\, x} .
\end{equation} 
\end{exmp}

\begin{exmp}
Furthermore, $\forall x\in\mathbb{R}$, $\forall s\in\mathbb{N}$ we can set

\begin{equation}\label{key}
x^{\frac{s}{2}}\int _{0}^{\infty } C_{s}  (x\, u)\, e^{iu}du=x^{\frac{s}{2}} i\;\sum_{r=0}^\infty \frac{(-ix)^r}{(r+s)!}=(-1)^s \;i^{1-s} x^{-\frac{s}{2}}\left( e^{-ix}-e_{s-1}(-ix)\right) 
\end{equation}
where $C_{s} (x)$ is the $s$-order Tricomi function, satisfying the identity \cite{Germano}

\begin{equation}\begin{split} \label{GrindEQ__23_} 
& \left(\frac{d}{dx} \right)^{s} C_{0} (x)=(-1)^{s} C_{s} (x), \\ 
& C_{s} (x)=\sum _{r=0}^{\infty }\frac{(-x)^{r} }{r!\, (r+s)!},  
\end{split} 
\end{equation} 
or, by expanding the range of $s$, 

\begin{equation}\label{key}
x^{\frac{s}{2}}\int _{0}^{\infty } C_{s}  (x\, u)\, e^{iu}du=(-1)^s \;i^{1-s} x^{-\frac{s}{2}}\;\frac{e^{-ix}\left(\Gamma(s)-\gamma(s,-ix) \right) }{\Gamma(s)}.
\end{equation}
The results are obtained by using eq. \eqref{eq30}, Proposition \eqref{Prop1} and eq. \eqref{GrindEQ__22_} .\\

\noindent We can so deduce the integrals

  \begin{equation}\begin{split}
  & x^{\frac{s}{2}}\int _{0}^{\infty } C_{s}  (x\, u)\, \sin (u)du=\int _{0}^{\infty }\frac{J_s\left(2\sqrt{xu} \right) }{u^{\frac{s}{2}}}\sin(u)du=x^{\frac{s}{2}} \; \frac{{}_1F_2\left[ 1;\frac{1}{2}+\frac{s}{2},1+\frac{s}{2};-\frac{x^2}{4}\right] }{s!}\\ 
  & x^{\frac{s}{2}}\int _{0}^{\infty } C_{s}  (x\, u)\, \cos (u)du=\int _{0}^{\infty }\frac{J_s\left(2\sqrt{xu} \right) }{u^{\frac{s}{2}}}\cos(u)du=x^{\frac{s}{2}+1} \; \frac{{}_1F_2\left[ 1;1+\frac{s}{2},\frac{3}{2}+\frac{s}{2};-\frac{x^2}{4}\right] }{(s+1)!}
  \end{split} \end{equation} 
and, by a straightforward application of eqs. \eqref{GrindEQ__11_} and \eqref{eq30} we also obtain

\begin{equation}\label{key}
\int _{0}^{\infty }C_{0}  (x\, u)\, J_{0} (u)du=J_{0} (x).
\end{equation}
\end{exmp}
 What we have described so far displays the ``wild'' potentialities of the method, further elements proving its reliability yielding significant possibilities, which we are going to discuss in the forthcoming sections.

\section{Borel Transform}

 The theory of integral transforms is one of the pillars of operational calculus \cite{Prudnikov}. On the other side many transforms have been shown to be expressible in terms of exponential operators, as shown in the case of the Fractional Fourier and Airy transforms \cite{Borel}.\\

 In this section we make a further step in this direction by developing new analytical tools to reformulate the theory of the Borel transform ($BT$) \cite{Babusci} 
 
\begin{equation} \label{GrindEQ__25_} 
f_{B} (x)=\int _{0}^{\infty }e^{-t}  f(t\, x)\, d t 
\end{equation} 
which has played a significant role in treating the series resummation in quantum field theory \cite{DelFranco}.

\begin{prop}
 The use of the identity \cite{Babusci} 
 
\begin{equation} \label{GrindEQ__26_} 
f(t\, x)=t^{x\, \partial _{x} } f(x) 
\end{equation} 
allows to write \cite{Babusci}

\begin{equation} \label{GrindEQ__27_} 
\begin{split}
& f_{B} (x)=\hat{B}\, \left[f(x)\right]\, ,\\ 
& \hat{B}=\int _{0}^{\infty}e^{-t} t^{x\, \partial _{x} }  dt=\Gamma (x\, \partial _{x} +1).
 \end{split} 
\end{equation} 
\end{prop}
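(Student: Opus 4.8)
The plan is to verify the operator identity on the eigenbasis of the Euler operator $x\,\partial _x$ and then extend it by linearity and analyticity. First I would recall why the scaling identity \eqref{GrindEQ__26_} holds: the operator $x\,\partial _x$ acts diagonally on monomials, $x\,\partial _x\,x^{n}=n\,x^{n}$, so its exponential obeys $t^{x\,\partial _x}x^{n}=t^{n}x^{n}=(t\,x)^{n}$. Writing $f(x)=\sum _{n}a_{n}x^{n}$ and applying this term by term gives $t^{x\,\partial _x}f(x)=\sum _{n}a_{n}(t\,x)^{n}=f(t\,x)$, which is precisely \eqref{GrindEQ__26_}.

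Next I would insert this into the definition \eqref{GrindEQ__25_},
\begin{equation*}
f_{B}(x)=\int _{0}^{\infty }e^{-t}\,f(t\,x)\,dt=\int _{0}^{\infty }e^{-t}\,t^{x\,\partial _x}\,dt\;f(x),
\end{equation*}
where, since the integration runs over $t$ while $t^{x\,\partial _x}$ acts on the $x$-dependence of $f$, the operator has been pulled outside the integral. This already exhibits $f_{B}(x)=\hat{B}\,f(x)$ with $\hat{B}=\int _{0}^{\infty }e^{-t}\,t^{x\,\partial _x}\,dt$. Treating the Euler operator as a spectral parameter $\nu =x\,\partial _x$, the surviving $t$-integral is the Euler representation of the Gamma function, $\int _{0}^{\infty }e^{-t}\,t^{\nu }\,dt=\Gamma (\nu +1)$, and therefore $\hat{B}=\Gamma (x\,\partial _x+1)$, as claimed.

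To make these formal manipulations rigorous I would test the final identity directly on the eigenbasis: for $f(x)=x^{n}$ one computes $f_{B}(x)=x^{n}\int _{0}^{\infty }e^{-t}t^{n}\,dt=n!\,x^{n}$, while $\hat{B}\,x^{n}=\Gamma (x\,\partial _x+1)\,x^{n}=\Gamma (n+1)\,x^{n}=n!\,x^{n}$; the two agree on every monomial and hence, by linearity, on any $f$ admitting a power-series representation. The main obstacle is exactly the interchange of the $t$-integration with the operator $t^{x\,\partial _x}$, that is, the licence to treat $x\,\partial _x$ as an inert parameter under the integral sign. This step is only legitimate when $f$ is analytic, so that the diagonalization on monomials applies, and when the resulting series of Gamma-weighted coefficients converges; the monomial verification is what certifies both requirements and closes the argument.
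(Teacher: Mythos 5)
Your proof is correct and matches the paper's (implicit) reasoning: the paper states this proposition by citation without a written proof, but the surrounding computations (e.g.\ the action of $\hat{B}$ on $C_{0}(x)$ in eq.~\eqref{GrindEQ__28_}, where $x^{r}\mapsto\Gamma(r+1)\,x^{r}$) are exactly your diagonal-action-on-monomials argument combined with the Euler integral $\int_{0}^{\infty}e^{-t}t^{\nu}\,dt=\Gamma(\nu+1)$. Your closing caveat about interchanging the $t$-integration with the series is also the same caution the authors themselves raise immediately after eq.~\eqref{GrindEQ__30_}.
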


\begin{exmp}
According to such a procedure it is shown e.g. that the Borel transform of the $0$-order Tricomi function is just provided by \cite{SLicciardi}.

\begin{equation} \label{GrindEQ__28_} 
\hat{B}\, \left[C_{0} (x)\right]=\Gamma (x\, \partial _{x} +1)\, \left[C_{0} (x)\right]=\sum _{r=0}^{\infty }(-1)^{r}  \Gamma (r+1)\, \frac{x^{r} }{\left(r!\right)^{2} }  =e^{-x} . 
\end{equation} 
The $\hat{B}$ operator has evidently acted on the Bessel type function $C_0(x)$ by
providing a kind of ``downgrading" from higher transcendental function to
the ``simple" exponential. The successive application of the Borel operator to the same
previous function produces the further result reported below \cite{SLicciardi}.

\begin{equation} \label{GrindEQ__29_} 
\hat{B}^{2} \left[C_{0} (x)\right] =\hat{B}[e^{-x}]=\sum _{r=0}^{\infty }(-1)^{r}  \Gamma (r+1)\, \frac{x^{r} }{r!} =
\frac{1}{1+x}, \qquad  \left|x\right|<1.
\end{equation} 
Again, we notice the same behaviour: the exponential function has been reduced to a rational function.
The further application of $\hat{B}$ yields a diverging series, namely

\begin{equation} \label{GrindEQ__30_} 
\hat{B}^{3} \, \left[C_{0} (x)\right]=\sum _{r=0}^{\infty}(-1)^{r}  r!\, x^{r} , \qquad \forall x\in\mathbb{R}. 
\end{equation} 
\end{exmp}
We have interchanged Borel operators and series summation without taking too much caution. In the case of eq. \eqref{GrindEQ__28_} such a procedure is fully justified, in eq. \eqref{GrindEQ__29_} the method is limited to the convergence region while in the case of eq. \eqref{GrindEQ__30_} it is not justified since it gives rise to a diverging series. We will take in the following some freedom in handling these problems and include in our treatment also the case of diverging series.\\

 Since the repeated application of $BT$ is associated with the Borel operator raised to some integer power, we explore the possibility of defining a fractional $BT$ and more in general a real power positive and negative $BT$.

\begin{defn}
 We introduce the operator

\begin{equation} \label{GrindEQ__31_} 
\hat{B}_{\alpha } =\int _{0}^{\infty }e^{-t} t^{\alpha \, x\, \partial _{x} }  dt=\Gamma \left(\alpha \, x\, \partial _{x} +1\right), \qquad \forall \alpha\in\mathbb{R}
\end{equation} 
which will be referred as the $\alpha$-order Borel transform. 
\end{defn}

\begin{exmp}
We find that the $\frac{1}{2}$-order applied to the $0$-order Bessel function yields \cite{SLicciardi}

\begin{equation}\label{GrindEQ__32_}
\hat{B}_{\frac{1}{2} } \left[J_{0} (x)\right]=
\Gamma \left(\frac{1}{2} x\, \partial _{x} +1\right)\sum _{r=0}^{\infty }\frac{(-1)^{r} }{\left(r!\right)^{2} }  \left(\frac{x}{2} \right)^{2 r} =
\sum _{r=0}^{\infty }\frac{(-1)^r }{r!}  \left(\frac{x}{2} \right)^{2 r} =e^{-\left(\frac{x}{2} \right)^{2} } .
\end{equation}
\end{exmp}

\begin{exmp}
By assuming, for $\alpha>0$, that an operator $\left(\hat{B}_{\alpha } \right)^{-1} $such that 

\begin{equation} \label{GrindEQ__33_} 
\begin{split}
& \left(\hat{B}_{\alpha } \right)^{-1} \hat{B}_{\alpha } =\hat{1}, \\ 
& \left(\hat{B}_{\alpha } \right)^{-1} =\frac{1}{\Gamma \left(\alpha \, x\, \partial _{x} +1\right)} ,\qquad  \forall \alpha\in\mathbb{R}
\end{split}
\end{equation} 
exists, we can invert eq. \eqref{GrindEQ__32_} and write 

\begin{equation}\label{key}
\left(\hat{B}_{\frac{1}{2} } \right)^{-1} \left[e^{-\left(\frac{x}{2} \right)^{2} } \right]=J_{0} (x).
\end{equation}
\end{exmp}

A more rigorous definition of the inverse of the operator $\hat{B}_{\alpha}$ may be achieved through the use of the Hankel contour integral \cite{Prudnikov}, namely 

\begin{equation}\label{key}
\frac{1}{\Gamma (z)} =\frac{i}{2\, \pi } \int _{C} \frac{e^{-t} }{(-t)^{z } }  \, dt, \qquad \left|z\right|<1,
\end{equation}
which can be exploited to write

\begin{equation}\label{key}
\left(\hat{B}_{\alpha } \right)^{-1} f(x)=\frac{i}{2\, \pi } \int _{C}\frac{e^{-t} }{t}  f\left(\frac{x}{(-t)^{\alpha } } \right)\, dt.
\end{equation}
 After the previous remarks we can state the following Theorem \cite{SLicciardi}.

\begin{thm}
Let $f(x)$ a function such that $\int _{-\infty }^{+\infty }f(x)\, dx=k$, $\forall k\in\mathbb{R}$, then 

\begin{equation}\label{key}
\int _{-\infty }^{+\infty }\hat{B}_{\alpha } \left[f(x)\right] \, dx=k\, \Gamma (1-\alpha ), \qquad \mid \alpha \mid<1.
\end{equation}
\end{thm}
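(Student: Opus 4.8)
The plan is to reduce the statement to an elementary Fubini argument combined with the Eulerian integral for $\Gamma$. First I would unfold the action of $\hat{B}_{\alpha}$ on $f$ through its operational definition \eqref{GrindEQ__31_}. Using the scaling identity \eqref{GrindEQ__26_}, which gives $t^{x\partial_x}f(x)=f(tx)$ and hence, since $t^{\alpha x\partial_x}=(t^{\alpha})^{x\partial_x}$, also $t^{\alpha x\partial_x}f(x)=f(t^{\alpha}x)$, the transform can be written in purely integral form as
\[
\hat{B}_{\alpha}[f(x)]=\int_{0}^{\infty}e^{-t}\,f(t^{\alpha}x)\,dt.
\]

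With this representation in hand, I would insert it into the left-hand side and interchange the order of the two integrations,
\[
\int_{-\infty}^{+\infty}\hat{B}_{\alpha}[f(x)]\,dx=\int_{0}^{\infty}e^{-t}\left(\int_{-\infty}^{+\infty}f(t^{\alpha}x)\,dx\right)dt.
\]
In the inner integral I would perform the change of variable $u=t^{\alpha}x$, legitimate since $t^{\alpha}>0$ for $t>0$, which yields $\int_{-\infty}^{+\infty}f(t^{\alpha}x)\,dx=t^{-\alpha}\int_{-\infty}^{+\infty}f(u)\,du=k\,t^{-\alpha}$ by the hypothesis. Substituting back, the outer integral collapses to $k\int_{0}^{\infty}e^{-t}t^{-\alpha}\,dt$, and recognizing this as the Euler integral $\Gamma(1-\alpha)=\int_{0}^{\infty}e^{-t}t^{(1-\alpha)-1}\,dt$ delivers the claimed value $k\,\Gamma(1-\alpha)$. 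The convergence of this last integral at $t=0$ requires $\operatorname{Re}(1-\alpha)>0$, i.e. $\alpha<1$, which together with the analyticity domain underlying the Hankel representation of $(\hat{B}_{\alpha})^{-1}$ pins the admissible range to $|\alpha|<1$.

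The step I expect to be the main obstacle is the justification of the interchange of integrations: Fubini's theorem demands an integrability control on $e^{-t}f(t^{\alpha}x)$ over $(0,\infty)\times\mathbb{R}$, which is not automatic from the bare hypothesis $\int f=k$ and would in general require an absolute-integrability assumption on $f$ (for instance $f\in L^{1}$, stable under the dilations $x\mapsto t^{\alpha}x$). I would therefore either strengthen the hypothesis to absolute integrability, so that the swap and the variable change are rigorously warranted, or carry out the manipulation within the formal, operational spirit adopted throughout the paper, under the \emph{principle of permanence of the formal properties}. Once the interchange is granted, the remaining computation is routine and the restriction $|\alpha|<1$ emerges naturally as the convergence condition of the Gamma integral.
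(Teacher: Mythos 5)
Your proposal is correct and follows essentially the same route as the paper's own proof: unfold $\hat{B}_{\alpha}[f]$ as $\int_{0}^{\infty}e^{-t}f(t^{\alpha}x)\,dt$, interchange the two integrations, perform the substitution $\sigma=t^{\alpha}x$ to extract $k\,t^{-\alpha}$, and identify the remaining $t$-integral with $\Gamma(1-\alpha)$. Your added remark on the Fubini/absolute-integrability justification is a point the paper glosses over, but it does not change the argument.
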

\begin{proof}
The proof is fairly straightforward by applying eq. \eqref{GrindEQ__25_} and the variable change $t^\alpha x=\sigma$.
 $\forall k\in\mathbb{R}$, $\mid \alpha \mid<1$, we find

\begin{equation*} \label{GrindEQ__38_} 
\begin{split}
 \int _{-\infty }^{+\infty }\hat{B}_{\alpha } \left[f(x)\right] \, dx&=
\int _{-\infty }^{+\infty }\left( \int _{0}^{\infty }e^{-t}  f(t^{\alpha } \, x) \, dt\right) dx= \int _{-\infty}^{+\infty }\; e^{-t} \left( \int _{0}^{\infty }f(t^{\alpha } x)  \, dx\right)  dt=\\
& =\int _{-\infty}^{+\infty }\; e^{-t} t^{-\alpha } \left( \int _{0}^{\infty }f(\sigma )  \, d\sigma \right)  dt=\int _{-\infty}^{+\infty } f(\sigma)\left( \int _{0}^{\infty }e^{-t}t^{-\alpha}dt\right)d\sigma 
=k\, \Gamma (1-\alpha ).
\end{split} 
\end{equation*} 
\end{proof}
The same procedure can be exploited for cases involving the inverse transform.\\

\noindent It is evident that the previous Theorem can be applied to the derivation of the integrals of Bessel functions.

\begin{exmp}
 For this specific case we get

\begin{equation} \label{GrindEQ__39_} 
\int _{-\infty }^{+\infty }\hat{B}_{\frac{1}{2} } \left[J_{0} (x)\right]dx =I_{J_{0} } \Gamma \left( \frac{1}{2} \right) , 
\end{equation} 
where $I_{J_{0} }$ is the integral of the Bessel function assumed to be unknown and once derived from  eq. \eqref{GrindEQ__32_} yields

\begin{equation} \label{GrindEQ__40_} 
I_{J_{0} } =\left(\Gamma \left( \frac{1}{2} \right) \right)^{-1} \int _{-\infty }^{+\infty }e^{-\left(\frac{x}{2} \right)^{2} } dx =2 .
\end{equation} 
\end{exmp}

The problems we have touched so far will be more deeply discussed in the forthcoming sections where we will further extent the concept of Borel transform and state the link with previous researches.

\section{Generalization of Borel Transforms and their Applications to Special Functions}

\noindent The use of Borel transform techniques is a widely exploited tool in Analysis and in applied science. In quantum field theory it became a tool of crucial importance to deal with problems concerning series resummation in renormalization theories \cite{DelFranco,Kleinert,Elizalde}. In the following we will adopt and generalize some of the techniques developed within such a research framework to provide a different formal environment for the umbral formalism discussed in Sec. \ref{UMNDF}. \\

\noindent To better clarify the relevance of Borel resummation methods to the topics of the present article, we critically review what we did in the previous section. The successive application of the Borel transform to a function with non-zero radius of convergence has led to a function whose series coefficients exhibit an exponential growth with the order of expansion. Its range of convergence is therefore getting smaller and smaller, eventually vanishing.  In quantum field theory one is faced with the opposite problem, namely that of recovering a function with non-zero radius of convergence starting from a diverging series.  The problem is ``cured'' by dividing each term in the expansion by a factor $k!$.  In this way we obtain a Borel sum, if it can be analytically summed and analytically continued over the whole real axis, it is called Borel summable \cite{E.Borel,GHHArdy}. After these remarks, we can state the following.

\begin{prop}
	  The $0$-order Bessel function is an inverse of $\frac{1}{2}$-order Borel sum of a Gaussian, $\forall x\in\mathbb{R}$, 

\begin{equation}\label{key}
\left(\hat{B}_{\frac{1}{2} } \right)^{-1} e^{-\left(\frac{x}{2} \right)^{2} } =\left(\hat{B}_{\frac{1}{2}} \right)^{-1} \sum _{r=0}^{\infty }\; f_{r} \left( \frac{x}{2}\right) ^{2 r}  =\sum _{r=0}^{\infty }f_r\frac{1 }{r!} \left(\frac{x}{2} \right)^{2\, r}  ,\qquad f_{r} =\frac{(-1)^{r} }{r!} .
\end{equation}
\end{prop}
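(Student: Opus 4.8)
The plan is to exploit the fact that the monomials $\left(\frac{x}{2}\right)^{2r}$ are eigenfunctions of the Euler (dilation) operator $x\,\partial_x$, so that the operator $\Gamma\!\left(\frac{1}{2}x\,\partial_x+1\right)$ of eq. \eqref{GrindEQ__31_}, and its reciprocal \eqref{GrindEQ__33_}, act diagonally on the power-series expansion of the Gaussian. First I would expand $e^{-\left(\frac{x}{2}\right)^2}=\sum_{r=0}^\infty f_r\left(\frac{x}{2}\right)^{2r}$ with $f_r=\frac{(-1)^r}{r!}$, which fixes the coefficients appearing in the middle member of the claimed identity.

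Next I would compute the action on a single monomial. Since $x\,\partial_x\left(\frac{x}{2}\right)^{2r}=2r\left(\frac{x}{2}\right)^{2r}$, one has $\left(\frac{1}{2}x\,\partial_x+1\right)\left(\frac{x}{2}\right)^{2r}=(r+1)\left(\frac{x}{2}\right)^{2r}$, whence by \eqref{GrindEQ__31_}
\[
\hat{B}_{\frac12}\left(\frac{x}{2}\right)^{2r}=\Gamma(r+1)\left(\frac{x}{2}\right)^{2r}=r!\left(\frac{x}{2}\right)^{2r}.
\]
Consequently the inverse operator of \eqref{GrindEQ__33_} acts as division by $r!$,
\[
\left(\hat{B}_{\frac12}\right)^{-1}\left(\frac{x}{2}\right)^{2r}=\frac{1}{r!}\left(\frac{x}{2}\right)^{2r},
\]
which is well defined for every $r\ge 0$ because $r!\neq 0$.

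Applying this term by term gives
\[
\left(\hat{B}_{\frac12}\right)^{-1}\sum_{r=0}^\infty f_r\left(\frac{x}{2}\right)^{2r}=\sum_{r=0}^\infty\frac{f_r}{r!}\left(\frac{x}{2}\right)^{2r}=\sum_{r=0}^\infty\frac{(-1)^r}{(r!)^2}\left(\frac{x}{2}\right)^{2r},
\]
which is exactly the series for $J_0(x)$ recorded in \eqref{GrindEQ__11_}; this closes the chain of equalities and, as expected, inverts the forward computation \eqref{GrindEQ__32_}.

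The step I expect to be the main obstacle is the rigorous justification of commuting the nonlocal operator $\left(\hat{B}_{\frac12}\right)^{-1}=1/\Gamma\!\left(\frac{1}{2}x\,\partial_x+1\right)$ with the infinite summation. The eigenvalue identity is immediate on each monomial, but legitimizing the term-by-term action is most cleanly done through the Hankel-contour representation of $1/\Gamma$ introduced just before the theorem: one checks that it reproduces the diagonal action $\left(\frac{x}{2}\right)^{2r}\mapsto \frac{1}{r!}\left(\frac{x}{2}\right)^{2r}$ on monomials (here some care with the orientation and sign of the contour is needed) and that the resulting series defines an entire function, so that the interchange of operator and sum is fully justified.
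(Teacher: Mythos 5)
Your proposal is correct and follows essentially the same route as the paper: the diagonal action of $\Gamma\left(\tfrac{1}{2}x\,\partial_x+1\right)$ on the monomials $\left(\tfrac{x}{2}\right)^{2r}$ with eigenvalue $r!$ is exactly the computation underlying eq. \eqref{GrindEQ__32_}, and the Proposition is obtained by running that computation in reverse via \eqref{GrindEQ__33_}. The paper gives no separate argument beyond remarking that the statement is a rewording of the umbral definition $J_0(x)=e^{-\hat{c}\left(\frac{x}{2}\right)^{2}}\varphi_0$ with $\hat{c}^{r}\varphi_0=\frac{1}{r!}$, so your explicit term-by-term verification (and your closing remark on justifying the interchange of the operator with the infinite sum) is, if anything, more detailed than the source.
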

 The previous statement is essentially a rewording of the Umbral definition of the Bessel functions discussed in Sec. \ref{UMNDF}. As a preliminary conclusion we will adopt the following simplification.

\begin{thm}
In umbral context, the $\alpha$-order Borel anti transform of a function $f(x)$, characterized by the formal series expansion $f(x)=\sum _{r=0}^{\infty }f_{r}  x^{r} $, can be written as 

\begin{equation} \label{GrindEQ__48_} 
\left(\hat{B}_{\alpha} \right)^{-1} \left[f(x)\right]=\sum _{r=0}^{\infty }f_{r}  \left(\hat{c}^\alpha\, x\right)^{r} \varphi _{0} , \qquad \forall \alpha\in \mathbb{R} 
\end{equation} 
and for all the operations of integration, derivative, series summation and so on, the operator $\hat{c}$ can be treated as an ordinary constant.
\end{thm}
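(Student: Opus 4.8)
The plan is to reduce everything to the action on monomials, exploiting the fact that the Euler operator $x\,\partial_x$ is diagonal on the basis $\{x^r\}$: since $x\,\partial_x\,x^r=r\,x^r$, each monomial is an eigenvector with eigenvalue $r$. Hence $t^{\alpha\,x\,\partial_x}x^r=t^{\alpha r}x^r$, and feeding this into the integral representation of Definition \eqref{GrindEQ__31_} gives directly
\begin{equation*}
\hat{B}_\alpha\,x^r=\left(\int_0^\infty e^{-t}\,t^{\alpha r}\,dt\right)x^r=\Gamma(\alpha\,r+1)\,x^r,
\end{equation*}
so that, using the inverse defined in eq. \eqref{GrindEQ__33_},
\begin{equation*}
\left(\hat{B}_\alpha\right)^{-1}x^r=\frac{1}{\Gamma(\alpha\,x\,\partial_x+1)}\,x^r=\frac{x^r}{\Gamma(\alpha\,r+1)}.
\end{equation*}
In words, any function of $x\,\partial_x$ acts on $x^r$ by replacing the operator with its eigenvalue, and for the anti-transform this amounts to dividing the $r$-th coefficient by $\Gamma(\alpha\,r+1)$.

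Next I would act term by term on the formal expansion $f(x)=\sum_r f_r\,x^r$ and collect the result as $\left(\hat{B}_\alpha\right)^{-1}[f(x)]=\sum_{r=0}^{\infty} f_r\,x^r/\Gamma(\alpha\,r+1)$. The identification with the right-hand side of eq. \eqref{GrindEQ__48_} then follows from the umbral rule $\hat{c}^{\nu}\varphi_0=1/\Gamma(\nu+1)$ upon setting $\nu=\alpha\,r$ and noting that, since $\hat{c}$ commutes with $x$, one has $(\hat{c}^\alpha x)^r=\hat{c}^{\alpha r}x^r$; explicitly,
\begin{equation*}
\sum_{r=0}^{\infty} f_r\,(\hat{c}^\alpha x)^r\varphi_0=\sum_{r=0}^{\infty} f_r\,x^r\,\hat{c}^{\alpha r}\varphi_0=\sum_{r=0}^{\infty} f_r\,\frac{x^r}{\Gamma(\alpha\,r+1)},
\end{equation*}
which coincides with the previous expression and establishes the stated formula.

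The main point requiring care — and precisely the substance of the second half of the statement — is the legitimacy of the term-by-term action of $(\hat{B}_\alpha)^{-1}$ and the assertion that $\hat{c}$ may be handled as an ordinary constant under integration, differentiation and summation. I would justify this by observing that $\hat{c}=e^{\partial_z}$ acts on the auxiliary vacuum variable $z$, which is independent of $x$; consequently $\hat{c}$ commutes with every $x$-operation and can be carried freely through sums, derivatives and integrals, its only nontrivial effect appearing when it ultimately meets the vacuum through $\hat{c}^{\nu}\varphi_0=1/\Gamma(\nu+1)$. In the formal-series spirit already adopted in the discussion following eq. \eqref{GrindEQ__30_}, analytic convergence is set aside: eq. \eqref{GrindEQ__48_} is to be read as an identity of formal power series, so the interchange of operator and summation and the commutation of $\hat{c}$ are admissible precisely because no analytic rearrangement of an infinite sum is being invoked. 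The hard part is thus conceptual rather than computational, namely pinning down that this constant-like behaviour of $\hat{c}$ is exactly the content that makes the umbral and Borel anti-transform pictures coincide.
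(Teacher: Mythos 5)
Your proposal is correct and follows the same route the paper takes implicitly: the paper states this theorem as an adopted ``simplification'' without a separate formal proof, relying on exactly the monomial eigenvalue computation $\Gamma(\alpha\,x\,\partial_x+1)\,x^r=\Gamma(\alpha r+1)\,x^r$ that it carries out in its examples (e.g.\ eq.~\eqref{GrindEQ__32_} and the preceding Proposition for $\alpha=\tfrac12$) together with the umbral rule $\hat{c}^{\nu}\varphi_0=1/\Gamma(\nu+1)$. Your write-up simply makes that term-by-term argument, and the formal-series caveat about treating $\hat{c}$ as a constant, explicit.
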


\noindent The Laguerre polynomials can be framed within the same context too, and indeed the following identity is easily understood .

\begin{prop}
$\forall x,y\in\mathbb{R}$, $\forall n\in\mathbb{N}$, let

\begin{equation}\label{key}
\begin{split}
& \left(y-x\right)^{n} =\int _{0}^{\infty }e^{-t}  L_{n} (x\, t,\, y)dt, \\ 
& L_{n} (x,y)=\sum _{r=0}^{n}\binom{n}{r}\frac{(-1)^{r} }{r!}  x^{r} y^{n-r} .
\end{split}
\end{equation} 
\end{prop}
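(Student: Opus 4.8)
The plan is to evaluate the integral on the right-hand side directly, exploiting the fact that the factor $1/r!$ built into the definition of $L_n(x,y)$ is precisely what cancels the Gamma function generated by integrating a monomial against $e^{-t}$.

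First I would insert the series definition of $L_n(xt,y)$ into the integral, writing $L_n(xt,y)=\sum_{r=0}^n \binom{n}{r}\frac{(-1)^r}{r!}x^r y^{n-r} t^r$, so that the entire $t$-dependence is concentrated in the monomials $t^r$. Because this is a \emph{finite} sum, interchanging summation and integration needs no analytic justification, and I would immediately pull the sum outside the integral.

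Next I would invoke the Euler integral for the Gamma function, $\int_0^\infty e^{-t} t^r\,dt=\Gamma(r+1)=r!$, already recalled in eq. \eqref{GrindEQ__13_}. This produces a factor $r!$ in each term that exactly annihilates the $1/r!$ in the definition, leaving $\sum_{r=0}^n \binom{n}{r}(-1)^r x^r y^{n-r}$. Recognizing this as the binomial expansion $\sum_{r=0}^n \binom{n}{r}(-x)^r y^{n-r}=(y-x)^n$ then closes the argument.

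I would also remark that the statement is an instance of the Borel operator $\hat{B}=\Gamma(x\,\partial_x+1)$ of eq. \eqref{GrindEQ__27_} acting on the $x$ variable: since $x^r$ is an eigenfunction of $x\,\partial_x$ with eigenvalue $r$, one has $\hat{B}\,x^r=\Gamma(r+1)x^r=r!\,x^r$, and the same termwise cancellation follows. There is essentially no obstacle here; the only conceptual point worth flagging is that the cancellation depends on the deliberate $1/r!$ normalization in the definition of $L_n(x,y)$, so the identity should be read as a reflection of this umbral/Borel choice rather than of the classical Laguerre polynomials.
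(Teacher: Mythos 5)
Your proof is correct and is essentially the paper's own argument run in the opposite direction: the paper starts from $(y-x)^{n}$, inserts $1=\frac{1}{r!}\int_{0}^{\infty}e^{-t}t^{r}\,dt$ into each binomial term and recombines, while you expand $L_{n}(xt,y)$ under the integral and use the same Euler integral $\int_{0}^{\infty}e^{-t}t^{r}\,dt=r!$ to cancel the $1/r!$. The additional remark identifying the statement as an instance of the Borel operator $\Gamma(x\,\partial_{x}+1)$ is consistent with the paper's framework but does not change the substance of the proof.
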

\begin{proof}
$\forall x,y\in\mathbb{R}$, $\forall n\in\mathbb{N}$

\begin{equation*}\label{key}
\left(y-x\right)^{n} =\sum_{r=0}^n \binom{n}{r}(-x)^r y^{n-r} \frac{\int_{0}^\infty e^{-t}t^r dr}{r!}=\int_{0}^\infty e^{-t}L_n(xt,y).
\end{equation*}	
\end{proof}

\begin{cor}
 A consequence of the previous relation is that 

\begin{equation} \label{GrindEQ__43_} 
\begin{split}
& \sum _{r=0}^{\infty }\xi ^{r} (y-x)^{r}  =\frac{1}{\left(1-y\, \xi \right)+\xi \, x} =\Gamma \left(x\, \partial _{x}+1 \right)\, G(x,y|\xi ), \\ 
& G(x,y|\xi )=\sum _{r=0}^{\infty }\xi ^{r} L_{r}  (x,y) \end{split} 
\end{equation} 
Accordingly, the generating function of the Laguerre polynomials is the inverse of the Borel transform of the geometric series, namely

\begin{equation}\label{key}
G(x,y|\xi)=\frac{1}{\Gamma (1+x\, \partial _{x} )} \left[\frac{1}{\left(1-y\, \xi \right)+\xi \, x} \right]=\frac{1}{\left(1-y\, \xi \right)+\xi \, \hat{c}\, x}\; \varphi _{0}=\frac{1}{1-y\, \xi } e^{-\frac{x\, \xi }{1-y\, \xi } } .
\end{equation}
\end{cor}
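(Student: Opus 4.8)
The plan is to read the two displayed equations as a chain of claims and establish them in turn, using the preceding Proposition as the bridge between powers of $(y-x)$ and the Laguerre polynomials. First I would dispose of the leftmost equality, which is merely the geometric series: since $\sum_{r=0}^{\infty}\left[\xi(y-x)\right]^{r}=\left(1-\xi(y-x)\right)^{-1}$ and $1-\xi(y-x)=(1-y\,\xi)+\xi\,x$, the identity $\sum_{r=0}^{\infty}\xi^{r}(y-x)^{r}=\frac{1}{(1-y\,\xi)+\xi\,x}$ holds for $|\xi(y-x)|<1$.

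For the central equality I would invoke the preceding Proposition, namely $(y-x)^{r}=\int_{0}^{\infty}e^{-t}L_{r}(x\,t,y)\,dt$. Recalling from \eqref{GrindEQ__27_} that $\int_{0}^{\infty}e^{-t}f(t\,x)\,dt=\hat{B}\,[f(x)]=\Gamma(x\,\partial_{x}+1)\,f(x)$, and reading $L_{r}(x,y)$ as a function of $x$ with $y$ a parameter, this is exactly $(y-x)^{r}=\Gamma(x\,\partial_{x}+1)\left[L_{r}(x,y)\right]$. Multiplying by $\xi^{r}$ and summing over $r$, the operator $\Gamma(x\,\partial_{x}+1)$---linear and independent of $\xi$---may be pulled outside the summation, giving $\sum_{r=0}^{\infty}\xi^{r}(y-x)^{r}=\Gamma(x\,\partial_{x}+1)\left[\sum_{r=0}^{\infty}\xi^{r}L_{r}(x,y)\right]=\Gamma(x\,\partial_{x}+1)\,G(x,y|\xi)$, which is the asserted equality.

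To obtain the closed form of $G(x,y|\xi)$ itself I would apply the inverse operator $\left(\hat{B}_{1}\right)^{-1}=\frac{1}{\Gamma(1+x\,\partial_{x})}$ of \eqref{GrindEQ__33_} to both sides, so that $G(x,y|\xi)=\frac{1}{\Gamma(1+x\,\partial_{x})}\left[\frac{1}{(1-y\,\xi)+\xi\,x}\right]$, and then evaluate the right-hand side through the umbral anti-transform rule \eqref{GrindEQ__48_} with $\alpha=1$. Expanding the rational function as a power series in $x$, $\frac{1}{(1-y\,\xi)+\xi\,x}=\sum_{r=0}^{\infty}f_{r}\,x^{r}$ with $f_{r}=\frac{(-\xi)^{r}}{(1-y\,\xi)^{r+1}}$ (valid for $\left|\frac{\xi\,x}{1-y\,\xi}\right|<1$), the rule replaces each $x^{r}$ by $(\hat{c}\,x)^{r}\varphi_{0}$. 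Using $\hat{c}^{r}\varphi_{0}=\frac{1}{\Gamma(r+1)}=\frac{1}{r!}$ then collapses the result to $G(x,y|\xi)=\frac{1}{1-y\,\xi}\sum_{r=0}^{\infty}\frac{1}{r!}\left(\frac{-\xi\,x}{1-y\,\xi}\right)^{r}=\frac{1}{1-y\,\xi}\,e^{-\frac{x\,\xi}{1-y\,\xi}}$, as claimed.

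The individual steps are routine; the delicate point is the interchange of the infinite summation with the operator $\Gamma(x\,\partial_{x}+1)$ and with its inverse, combined with the fact that the several series above are only simultaneously valid on overlapping convergence regions. In the umbral spirit of \eqref{GrindEQ__48_} these manipulations are carried out formally, so the main obstacle is not the computation but the justification that the formal anti-transform yields a genuine analytic function. Here the final exponential series converges for all $x$, which confirms \emph{a posteriori} that the interchange was legitimate and that the result extends beyond the narrow geometric-series window in which the derivation was first set up.
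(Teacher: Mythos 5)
Your argument is correct and follows essentially the same route the paper intends: sum the geometric series, use the preceding Proposition to recognize $(y-x)^r=\Gamma(x\,\partial_x+1)\,[L_r(x,y)]$ and pull the operator through the sum, then invert via the umbral rule $x^r\mapsto(\hat{c}\,x)^r\varphi_0$ with $\hat{c}^r\varphi_0=\frac{1}{r!}$ to collapse the series to $\frac{1}{1-y\,\xi}\,e^{-\frac{x\,\xi}{1-y\,\xi}}$. The paper leaves these steps implicit in the chain of displayed equalities, so your write-up simply makes the same computation explicit, including the (appropriately flagged) formal interchange of the operator with the infinite sum.
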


\begin{prop}
 If the Borel operator acts on the $y$ variable of Laguerre polynomial, we obtain $\forall x,y\in\mathbb{R}$, $\forall n\in\mathbb{N}$,

\begin{equation}\label{key}
\hat{B}\left[ L_n(x,y)\right]=b_{n} (x,\, y) ,
\end{equation}
 where $b_{n} (x,y)$ denotes the Bessel truncated polynomials \cite{Trunc}.
 \end{prop}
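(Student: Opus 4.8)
The plan is to apply $\hat{B}$ directly to the series definition of the Laguerre polynomial and then recognize the resulting finite sum. Since here the operator acts on the $y$ variable, I would use the operator form $\hat{B}=\Gamma(y\partial_y+1)$ from eq. \eqref{GrindEQ__27_} (with the roles of $x$ and $y$ interchanged), or equivalently the integral representation $\hat{B}[L_n(x,y)]=\int_0^\infty e^{-t}L_n(x,ty)\,dt$. Because $L_n(x,y)$ is a polynomial of degree $n$, there is no convergence question and the operator can be moved inside the finite sum without any caution of the kind discussed after eq. \eqref{GrindEQ__30_}.

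First I would insert $L_n(x,y)=\sum_{r=0}^n\binom{n}{r}\frac{(-1)^r}{r!}x^r y^{n-r}$ and let $\hat{B}$ act on each monomial $y^{n-r}$. Using $y\partial_y\,y^{n-r}=(n-r)\,y^{n-r}$ one obtains $\Gamma(y\partial_y+1)\,y^{n-r}=(n-r)!\,y^{n-r}$, which is of course the same as the elementary integral $\int_0^\infty e^{-t}t^{n-r}\,dt=(n-r)!$ appearing in the integral form. This is exactly the $y$-analogue of the computation that produced $(y-x)^n$ when $\hat{B}$ acted on the $x$ variable in the preceding Proposition.

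Next I would carry out the one nontrivial piece of arithmetic: the factor $(n-r)!$ cancels the denominator of the binomial coefficient, since $\binom{n}{r}(n-r)!=n!/r!$. The whole expression then collapses to $n!\sum_{r=0}^n\frac{(-1)^r}{(r!)^2}x^r y^{n-r}$, which is a finite truncation of the series whose infinite counterpart is the $0$-order Tricomi/Bessel object $C_0$ discussed in eq. \eqref{eq30}. This truncated, $y$-graded Bessel-type sum is the natural candidate for $b_n(x,y)$.

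The final step, which is where the statement really acquires its content, is to identify $n!\sum_{r=0}^n\frac{(-1)^r}{(r!)^2}x^r y^{n-r}$ with the Bessel truncated polynomials $b_n(x,y)$ as defined in \cite{Trunc}. The calculation above is entirely elementary, so the only genuine obstacle is a bookkeeping one: verifying that the normalization and sign conventions of $b_n(x,y)$ in the cited reference match the summand produced here, in particular the placement of the $n!$ prefactor and of the $(r!)^2$ in the denominator. Once these conventions are aligned the identity is immediate.
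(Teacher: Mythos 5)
Your proposal is correct and follows essentially the same route as the paper: the paper's proof is precisely the one-line computation $\hat{B}[L_n(x,y)]=\int_0^\infty e^{-t}L_n(x,yt)\,dt=\Gamma(y\partial_y+1)[L_n(x,y)]=n!\sum_{r=0}^n\frac{(-1)^r}{(r!)^2}x^ry^{n-r}=b_n(x,y)$, and you have simply made the termwise action on $y^{n-r}$ and the cancellation $\binom{n}{r}(n-r)!=n!/r!$ explicit. Nothing is missing.
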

\begin{proof}
\begin{equation*}\label{key}
\hat{B}\left[ L_n(x,y)\right]=\int _{0}^{\infty }e^{-t}  L_{n} (x\, ,\, y\, t)\;dt=\Gamma\left(y\partial_{y}+1 \right)\left[ L_n(x,y)\right]= n!\sum _{r=0}^{n}\frac{(-1)^{r} }{\left(r!\right)^{2} }  x^{r} y^{n-r}=b_{n} (x,\, y) .
\end{equation*}
\end{proof}

We can make further progress in the understanding of their properties by exploiting the Borel transform formalism. The use of  the following identity \cite{Trunc}

\begin{equation} \label{GrindEQ__46_} 
\sum _{n=0}^{\infty }\frac{\xi ^{n} }{n!}  L_{n} (x,\, y)=e^{y\xi } C_{0} (x\xi ), \qquad \forall \xi\in\mathbb{R} 
\end{equation} 
 yields e.g. the corresponding generating function for the Bessel truncated polynomials 

\begin{equation}\label{key}
\sum _{n=0}^{\infty }\frac{\xi ^{n} }{n!}  b_{n} (x,\, y)=\int _{0}^{\infty }e^{-t}  e^{y\, t\xi } dt\, C_{0} (x\xi )=\frac{C_{0} (x\;\xi )}{1-y\, \xi}, \qquad \forall \xi\in\mathbb{R} :\mid \xi \mid<\frac{1}{\mid y \mid}.
\end{equation}

\begin{exmp}
	A fairly immediate consequence of the previous result is their use for the derivation of the lacunary generating functions of Laguerre polynomials.
 According to our formalism we find indeed

\begin{equation}\label{key}
\begin{split}
\sum _{n=0}^{\infty }\xi^{n}  L_{2\, n} (x,\, y)&=\left(\hat{B}\right)^{-1} \sum _{n=0}^{\infty }\xi ^{n}  \left[y-x\right]^{2n} =\left(\hat{B}\right)^{-1} \left[\frac{1}{1-\xi \, \left(y-x\right)^{2} } \right] =\frac{1}{2} \left(\hat{B}\right)^{-1} \left[\frac{1}{1-\sqrt{\xi } \left(y-x\right)} +\frac{1}{1+\sqrt{\xi } \left(y-x\right)} \right]= \\ 
& =\frac{1}{2} \left[\frac{1}{\left(1-\sqrt{\xi } y\right)\, \left(1+\frac{\hat{c}\, \sqrt{\xi } x}{1-\sqrt{\xi } y} \right)} +\frac{1}{\left(1+\sqrt{\xi } y\right)\, \left(1-\frac{\hat{c}\, \sqrt{\xi } x}{1+\sqrt{\xi } y} \right)} \right]\, \varphi _{0} = \\ 
& =\frac{1}{2} \left[\frac{1}{\left(1-\sqrt{\xi } y\right)\, } e^{-\frac{\sqrt{\xi } x}{1-\sqrt{\xi } y} } +\frac{1}{\left(1+\sqrt{\xi } y\right)\, } e^{\frac{\sqrt{\xi } x}{1+\sqrt{\xi } y} } \right]. \qquad \forall y\in\mathbb{R}:\;\mid y\mid\;<\frac{1}{\sqrt{\xi}}.
 \end{split}   
\end{equation}
\end{exmp}
 In the concluding remarks we will further comment on the Laguerre lacunary generating functions and on the relevant link with previous researches \cite{Bab2}.\\

 To make further progresses in our discussion, we introduce the following integral transform, also known in the literature as the Borel-Leroy ($BL$) transform \cite{Zinn}
 
\begin{equation}\label{key}
{}_\gamma\hat{B}_{\alpha} \left[f(x)\right]=\int _{0}^{\infty }e^{-t}  t^{\gamma -1} f\left(t^{\alpha } x\right) dt
\end{equation}
 and the associated differential operator can also be written as

\begin{equation}\label{key}
{}_\gamma\hat{B}_{\alpha}=\Gamma (\gamma +\alpha \, x\, \partial _{x} ).
\end{equation}

\begin{exmp}
The relevant action on the $\gamma$-order Tricomi function yields e.g.

\begin{equation}\label{key}
\begin{split}
& {}_{\gamma+1}\hat{B}_{\alpha} \left[C_{\gamma } (x)\right]=e_{\left( \alpha,\gamma \right) } (-x), \\ 
& e_{\left( \alpha,\gamma \right) }(-x)=\sum _{r=0}^{\infty }\frac{(-x)^{r}}{r!}\frac{ \Gamma (\alpha \, r+\gamma+1)}{\Gamma (r+\gamma+1)}  
 \end{split}
\end{equation}
while the $BL$ anti-transform of the exponential yields the Bessel-Wright function (which is a generalization of the Tricomi-Bessel function), namely

\begin{equation}\label{key}
\begin{split}
& \left({}_{\gamma+1}\hat{B}_\alpha \right)^{-1} \left[e^{-x} \right]=W_{\left( \alpha,\;\gamma \right) }(-x)=\sum _{r=0}^{\infty }\frac{(-x)^{r} }{r!}\frac{1}{\Gamma (\alpha \, r+\gamma +1)}  ,\\
& W_{\left( \alpha,\;\beta \right) } (x)=\sum_{r=0}^\infty\frac{x^r}{r!\Gamma(\alpha r +\beta+1)}, \qquad \forall x\in\mathbb{R}, \forall \alpha,\beta\in\mathbb{R}_0^+ .
\end{split}
\end{equation}
\end{exmp}

\begin{exmp}
 As a further example of generalization we discuss the B-Borel transform 
\begin{equation} \label{GrindEQ__40b_} 
{}_{\gamma}\hat{BB}_\alpha^{(\beta,\delta)} \left[f(x)\right]=
\int _{0}^{1}  \left(1-t\right)^{\beta -1}t^{\gamma -1} f\left(t^{\alpha } \left(1-t\right)^{\delta } x\right)\, \, dt 
\end{equation} 
which, upon the use of the B-Euler function \cite{L.C.Andrews}, can be transformed in the differential form

\begin{equation}\label{key}
\begin{split}
& {}_{\gamma}\hat{BB}_\alpha^{(\beta,\delta)} =B(\gamma +\alpha \, x\, \partial _{x} ,\, \beta +\delta \, x\, \partial _{x} ), \\ 
& B(\alpha ,\beta )=\frac{\Gamma (\alpha )\, \Gamma (\beta )}{\Gamma (\alpha +\beta )} .
 \end{split}
\end{equation}
It is interesting to note that the previous operator acts on a Bessel-Wright $W_{(\alpha,0)}(x)$ by increasing the order namely, e.g., 

\begin{equation}\label{key}
{}_1\hat{BB}_\alpha^{(\beta,0)}\left[ \frac{W_{(\alpha,0)}(x)}{\Gamma(\beta)}\right] =\sum _{r=0}^{\infty }\frac{x^r }{r!\Gamma (\alpha r+\beta +1)} =W_{(\alpha,\beta)}(x),
\end{equation}
while, on an ordinary exponential function by transforming it into a Mittag-Leffler \cite{Mittag-Leffler}, namely

\begin{equation} \label{GrindEQ__42b_} 
\begin{split}
& {}_1\hat{BB}_1^{(\beta,0)}\left[  \frac{e^{x} }{\Gamma (\beta )} \right] = E_{\left( 1 ,\;\beta +1\right) } (x)=\sum _{r=0}^{\infty }\frac{x^r }{\Gamma ( r+\beta +1)}   ,\\
& E_{\left( \alpha,\;\beta\right) }(x)=\sum_{r=0}^\infty\frac{x^r}{\Gamma(\alpha r+\beta)}, \qquad \forall x\in\mathbb{R}, \forall \alpha,\beta\in\mathbb{R}^+ .
\end{split}
\end{equation} 
It is now evident that, regarding the evaluation of infinite integrals, if $\int_{0}^1 f(x)dx=k$ and through the identity in eq. \eqref{GrindEQ__40b_}, the following conclusion is ensured  

\begin{equation}\label{BBg}
\int _{-\infty }^{+\infty }{}_{\gamma}\hat{BB}_\alpha^{(\beta,\delta)} \left[f(x)\right] \, dx=k\, B\left(\gamma -\alpha ,\, \beta -\delta \right).
\end{equation}
A slight generalization of the method yields the further identity

\begin{equation}\label{key}
\int _{-\infty }^{+\infty }E_{\left( 1,\; \beta +1\right) } (-x^{2} )\, dx= \frac{\pi  }{\Gamma \left(\beta+\frac{1}{2}\right)} .
\end{equation}
\end{exmp}

These are just few examples of the wide possibilities offered by the present formalism further applications will be discussed in the forthcoming sections.

\section{The Case of Hermite Polynomials}

 In a paper of few years ago, Gessel and Jayawant \cite{Gessel} have discussed a triple lacunary generating function for Hermite polynomials. The authors employ two different methods, one of umbral nature, the other based on combinatorial arguments. In this section we reconcile the method of ref. \cite{Gessel} with the technique discussed in this paper. In particular we will take some advantage from the possibility of formulating the theory of Hermite polynomials by using the same point of view we adopted for the Laguerre family.\\

\noindent The $2$-variable polynomials, defined by the series \cite{Germano}

\begin{equation}\label{hermite}
H_{n} (x,\, y)=n!\, \sum _{r=0}^{\lfloor\frac{n}{2} \rfloor}\frac{x^{n-2\, r} y^{r} }{(n-2\, r)!\, r!}  
\end{equation}
belong to an Hermite-like family. This set of polynomials has many generalization and sometimes there is some confusion in the literature, regarding the relevant notation. For reasons which will be clarified in the following, they should be denoted by $H_{n}^{(2)} (x, y)$ and should be referred to as ``second order two-variable Hermite polynomials'', we will however keep the upper index only for polynomials with order $\ge 3$ or add it whenever strictly necessary to avoid confusion.\\

\noindent The generating  function reads

\begin{equation}\label{genfH}
\sum _{n=0}^{\infty }\frac{t^{n} }{n!}  H_{n} (x,\, y)=e^{x\, t+y\, t^{2} } .
\end{equation}
A remarkable property is the operational definition \cite{Germano}

\begin{equation} \label{GrindEQ__47b_} 
H_{n} (x,\, y)=e^{y\, \partial _{x}^{2} } x^{n}  
\end{equation} 
due to the fact that they are a solution of the partial differential equation
\begin{equation} \label{GrindEQ__48b_} 
\partial _{y} F(x,y)=\partial _{x}^{2} F(x,y), 
\end{equation} 
with the ``initial condition''

\begin{equation}\label{key}
F(x,\, 0)=x^{n} .
\end{equation}
For this reason they are also defined ``heat polynomials'' \cite{Widder}. From eq. \eqref{hermite} we derive the boundary condition at $x=0$
\begin{equation} \label{GrindEQ__50b_} 
H_{n} (0,\, y)=n!\frac{y^{\frac{n}{2} } }{\Gamma \left(\frac{n}{2} +1\right)} \, \left|\cos \left(n\, \frac{\pi }{2} \right)\right| .
\end{equation} 
The use of eq. \eqref{GrindEQ__47b_} is extremely useful, for example we can derive straightforwardly the double lacunary Hermite generating function, which can be formally written as 

\begin{equation}\label{GrindEQ__51a_}
\sum _{n=0}^{\infty }\frac{t^{n} }{n!}  H_{2\, n} (x,\, y)=e^{y\, \partial _{x}^{2} } e^{x^{2} t} .
\end{equation}
A definite meaning to the rhs of eq. \eqref{GrindEQ__51a_} is obtained through the application of the Gauss-Weierstrass transform \cite{Prudnikov}

\begin{equation} \label{GrindEQ__52_} 
e^{y\, \partial _{x}^{2} } f(x)=\frac{1}{2\, \sqrt{\pi \, y} } \int _{-\infty }^{+\infty }e^{-\frac{(\xi-x )^{2} }{4\, y} }  f(\xi )\, d\xi  
\end{equation} 
which allows the derivation of the generating function \eqref{GrindEQ__51a_} according to the following expression

\begin{equation} \label{GrindEQ__53_} 
\sum _{n=0}^{\infty }\frac{t^{n} }{n!}  H_{2\, n} (x,\, y)=\frac{1}{\sqrt{1-4\, y\, t} } e^{\frac{x^{2} t}{1-4\, y\, \, t} } ,\qquad \left|t\right|<\frac{1}{4\, \left|y\right|} 
\end{equation} 
which is sometimes called Doetsch rule \cite{Gessel2}.

\begin{exmp}
 The procedure we have adopted to derive eq. \eqref{GrindEQ__53_} can be extended to get the following generalization of the Doetsch rule 
 
\begin{equation}\label{key}
\begin{split}
\sum _{n=0}^{\infty }\frac{t^{n} }{n!}  H_{2\, n+l} (x,\, y)&=e^{y\partial _{x}^{2} } \left( x^{l} e^{ x^{2} t } \right)=\frac{1}{\sqrt{1-4\, y\, t} } e^{\frac{x^{2} t}{1-4\, y\, \, t} } \frac{H_{l} \left(\frac{x}{\sqrt{1-4\, y\, t} } ,\, y\right)}{\left(1-4\, y\, t\right)^{\frac{l}{2} } } = \\ 
& =\frac{1}{\sqrt{1-4\, y\, t} } e^{\frac{x^{2} t}{1-4\, y\, \, t} } H_{l} \left(\frac{x}{1-4\, y\, t} ,\, \frac{y}{1-4\, y\, t} \right), \qquad \left|t\right|<\frac{1}{4\, \left|y\right|} .
 \end{split}
\end{equation}
\end{exmp}

What we have described so far is what can be derived within the ``classical'' operational framework, it can be extended to higher order lacunary generating functions as we will show in the forthcoming sections. Symbolic methods can be equally efficient to formulate the theory of Hermite polynomials in a Laguerre fashion. 

\begin{defn}
We introduce the umbral operator

\begin{equation} \label{GrindEQ__55_} 
{}_y\hat{h}^{r} \theta _{0} :=\theta_r=\frac{y^{\frac{r}{2} } \, r!}{\Gamma \left(\frac{r}{2} +1\right)} \left|\cos \left(r\frac{\pi }{2} \right)\right| =\left\lbrace \begin{array}{ll} 
0 & r=2s+1 \\ y^s \dfrac{(2s)!}{s!} & r=2s
\end{array} \right. \forall s\in\mathbb{Z}.
\end{equation} 
\end{defn}
The operator ${}_y\hat{h}^r \theta_0$ has been defined in refs. \cite{SLicciardi,Motzkin,Gegenbauer} where it has been shown that Hermite polynomials can be written as

\begin{equation}\label{GrindEQ__56_}
H_{n} (x,\, y)=\left(x+{}_y\hat{h} \right)^{\, n} \theta _{0} , \qquad \forall x,y\in\mathbb{R}, \forall n\in\mathbb{N}. 
\end{equation}
According to eq. \eqref{GrindEQ__56_} the Hermite polynomials are reduced to the $n^{th}$ power of a binomial. All the relevant properties can be obtained by handling eq. \eqref{GrindEQ__56_} by means of elementary algebraic tools. The exponentiation of the umbra ${}_y\hat{h}$ will be particularly important in the present context. We note therefore the followinf properties. 

\begin{property}
	\begin{enumerate}
\item 
\begin{equation}\label{eq80}
e^{{}_y\hat{h}\, z} \theta_{0} =\sum _{r=0}^{\infty }\frac{\left({}_y\hat{h}\, z\right)^{\, r} }{r!} \theta_{0}  =e^{y\, z^{2} } .
\end{equation} 

\item 
\begin{equation}\label{key}
e^{{}_y\hat{h}^{2}z} \theta_{0} =\frac{1}{\sqrt{\pi } } \int _{-\infty }^{+\infty }e^{-\xi ^{2} +2\,  {}_y\hat{h} \sqrt{z}\;\xi } d\xi\;\theta_0 =\frac{1}{\sqrt{1-4\, y\, z} } , \qquad  \left|z\right|<\frac{1}{4\, \left|y\right|} 
\end{equation}
which is just a consequence of eq. \eqref{eq80} if we note that

\begin{equation}\label{key}
\frac{1}{\sqrt{\, \pi } } \int _{-\infty }^{+\infty }e^{-\xi ^{2} +2\, \xi \, {}_y\hat{h} \sqrt{z} } \,  d\xi \, \theta_0=\frac{1}{\sqrt{\, \pi } } \int _{-\infty }^{+\infty }e^{-\xi ^{2} (1-4\, y\, z)} \,  d\xi .
\end{equation}
\end{enumerate}
\end{property}

\begin{exmp}
The Doetsch formula and its extension can be therefore derived by the use of the identity \eqref{GrindEQ__56_}

\begin{equation}\label{key}
\sum _{n=0}^{\infty }\frac{t^{n} }{n!}  H_{2n} (x,\, y)=e^{(x+{}_y\hat{h})^{2} t} \theta_0=\frac{1}{\sqrt{\pi } } \int _{-\infty }^{+\infty }e^{-\xi ^{2} +2 \; \left( x+{}_y\hat{h}\right)\; \sqrt{t} \;
	 \xi } d\xi\; \theta_0.
\end{equation}

Let us now consider the same problem from a different point of view and write

\begin{equation} \label{GrindEQ__60_} 
 \sum _{n=0}^{\infty }\frac{t^{n} }{n!}  H_{2n} (x,\, y)=e^{(x+{}_y\hat{h})^{2} t} \theta_0=e^{x^{2} t} e^{{}_y\hat{h}^{2}\, t+2\, {}_y\hat{h}\, \, x\, t\, } \theta_0=e^{x^{2} t} \sum _{r=0}^{\infty }\frac{{}_y\hat{h}^{r} }{r!}  H_{r} (2\, x\, t,\, t)\, \theta_0 =e^{x^{2} t} \sum _{s=0}^{\infty }\frac{y^s }{s!}  H_{2s} (2\, x\, t,\, t)
\end{equation} 
which has been derived by using eqs. \eqref{genfH} and \eqref{GrindEQ__55_}. By comparing with the Doetch rule eq. \eqref{GrindEQ__53_}, we get the operational identity

\begin{equation}\label{key}
e^{{}_y\hat{h}^{2} \, t+2\, {}_y\hat{h}\, \, x\, t\, } \theta_0=\frac{1}{\sqrt{1-4\, t\, y} } e^{\frac{4 \, \left(t\, x\right)^{2}\;y }{1-4\, t\, y} } .
\end{equation}
\end{exmp}

Before going further let us note that the third order Hermite polynomials \cite{Babusci}

\begin{equation} \label{GrindEQ__17_} 
H_{n}^{(3)} (x,\, y,z)=n!\, \sum _{r=0}^{\lfloor\frac{n}{3} \rfloor}\frac{z^{r} H_{n-3\, r} (x,y)}{r!\, (n-3\, r)!}   
\end{equation} 
can be defined through the generating function

\begin{equation}\label{eq87}
\sum _{n=0}^{\infty }\frac{t^{n} }{n!}  H_{n}^{(3)} (x,\, y,\, z)=e^{x\, t+y\, t^{2} +z\, t^{3} } .
\end{equation}

\begin{thm}
	The triple lacunary generating function of second order Hermite polynomials, can be expressed in terms of double lacunary generating function of third order Hermite polynomials as
	
	\begin{equation}\label{key}
	\sum _{n=0}^{\infty }\frac{t^{n} }{n!}  H_{3\, n} (x,\, y)=e^{x^{3}t } \sum _{s=0}^{\infty }\frac{y^s}{s!}  H_{2s}^{(3)} (3\, x^{2} t,\, 3\, x\, t ,t).
	\end{equation}
\end{thm}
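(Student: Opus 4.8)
The plan is to mirror the derivation of the double lacunary generating function in eq. \eqref{GrindEQ__60_}, replacing the square by a cube. First I would invoke the umbral representation \eqref{GrindEQ__56_} to write each $H_{3n}(x,y)=(x+{}_y\hat{h})^{3n}\theta_0$ and resum the exponential series, obtaining
\begin{equation*}
\sum_{n=0}^{\infty}\frac{t^{n}}{n!}H_{3n}(x,y)=e^{(x+{}_y\hat{h})^{3}t}\theta_0,
\end{equation*}
where the umbra ${}_y\hat{h}$ is treated as an ordinary constant, as sanctioned by the formalism of Sec. \ref{UMNDF}.

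Next I would expand the cube, $(x+{}_y\hat{h})^{3}=x^{3}+3x^{2}\,{}_y\hat{h}+3x\,{}_y\hat{h}^{2}+{}_y\hat{h}^{3}$, and factor the purely numerical term $e^{x^{3}t}$ out of the exponential. The remaining factor $e^{(3x^{2}t)\,{}_y\hat{h}+(3xt)\,{}_y\hat{h}^{2}+t\,{}_y\hat{h}^{3}}$ has exactly the structure of the third order Hermite generating function \eqref{eq87}, with ${}_y\hat{h}$ playing the role of the expansion variable and with $(3x^{2}t,\,3xt,\,t)$ as its three arguments. This identification yields
\begin{equation*}
e^{(x+{}_y\hat{h})^{3}t}\theta_0=e^{x^{3}t}\sum_{r=0}^{\infty}\frac{{}_y\hat{h}^{\,r}}{r!}H_{r}^{(3)}(3x^{2}t,\,3xt,\,t)\,\theta_0.
\end{equation*}

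Finally I would act with the vacuum $\theta_0$ and apply the umbral rule \eqref{GrindEQ__55_}: the odd powers ${}_y\hat{h}^{2s+1}\theta_0$ vanish, while ${}_y\hat{h}^{2s}\theta_0=y^{s}(2s)!/s!$. Substituting $r=2s$ cancels the $(2s)!$ against the factorial in the denominator and reproduces the claimed right-hand side. The only point demanding care is the legitimacy of treating the umbra ${}_y\hat{h}$ as an ordinary parameter throughout, both in summing the exponential series and in matching the generating function \eqref{eq87}; but this is precisely the operational licence established for ${}_y\hat{h}$ in eqs. \eqref{eq80}--\eqref{GrindEQ__56_}, so no genuine obstacle arises and the computation runs entirely parallel to the double lacunary case \eqref{GrindEQ__60_}.
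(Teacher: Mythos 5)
Your proposal is correct and follows the paper's own proof exactly: the umbral rewriting $\sum_n \frac{t^n}{n!}H_{3n}(x,y)=e^{(x+{}_y\hat{h})^3 t}\theta_0$, the identification of $e^{(3x^2t)\,{}_y\hat{h}+(3xt)\,{}_y\hat{h}^2+t\,{}_y\hat{h}^3}$ with the generating function \eqref{eq87}, and the final evaluation of the vacuum via \eqref{GrindEQ__55_} are precisely the steps the authors take. You merely spell out the intermediate cancellation of $(2s)!$ more explicitly than they do.
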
	
\begin{proof}
On account of eqs. \eqref{GrindEQ__56_} we can write the triple lacunary Hermite generating function as

\begin{equation}\label{key}
\sum _{n=0}^{\infty }\frac{t^{n} }{n!}  H_{3\, n} (x,\, y)=e^{\left( x+{}_y\hat{h}\right) ^{3} t } \;\theta_0
\end{equation}
which, according to eq. \eqref{eq87}, allows the following conclusion 

\begin{equation} \label{GrindEQ__20b_} 
\sum _{n=0}^{\infty }\frac{t^{n} }{n!}  H_{3\, n} (x,\, y)=e^{x^{3}t } \sum _{r=0}^{\infty }\frac{{}_y\hat{h}^{r} }{r!}  H_{r}^{(3 )} (3\, x^{2} t,\, 3\, x\, t ,t)\, \theta_0=e^{x^{3}t } \sum _{s=0}^{\infty }\frac{y^s}{s!}  H_{2s}^{(3)} (3\, x^{2} t,\, 3\, x\, t ,t).
\end{equation} 
\end{proof}

\section{Associated Hermite Polynomials }

\begin{defn}
	 In analogy  with the case of Laguerre polynomials \cite{L.C.Andrews}, we introduce the \textit{associated Hermite polynomials} which, according to the present formalism read, $\forall x,y\in\mathbb{R}$ and $\forall n,p\in\mathbb{N}$,

\begin{equation}\label{key}
H_{n} (x,\, y|p)={}_y\hat{h}^{p} \left(x+{}_y\hat{h}\right)^{\, n} \theta_0=
\sum _{r=0}^{n}\binom{n}{r}\frac{x^{n-r}y^{\frac{r+p}{2}}(r+p)!  }{\Gamma \left(\frac{r+p}{2} +1\right)}  \left|\cos \left((r+p)\frac{\pi}{2} \right)\right|.
\end{equation}
\end{defn}

They cannot be identified with the generalized heat polynomials \cite{LevySt} which, within the present context, deserve a separate treatment. According to the previous definition we can ``state'' the following.

\begin{property}
	\begin{itemize}
		\item Index-duplication formula

\begin{equation} \label{GrindEQ__22b_} 
H_{2\, n} (x,\, y)=(x+{}_y\hat{h})^{n} (x+{}_y\hat{h})^{n} \theta_0=n!\, \sum _{s=0}^{n}\frac{x^{n-s} }{(n-s)!\, s!}  H_{n} (x,\, y|s).
\end{equation} 
	\item  Argument-duplication formula

\begin{equation} \label{GrindEQ__23b_} 
H_{n} (2\, x,\, y)=\left[\left(x+\frac{{}_y\hat{h}}{2} \right)+\left(x+\frac{{}_y\hat{h}}{2} \right)\right]^{\, n} \theta_0=\, \sum _{s=0}^{n}\binom{n}{s} \sum _{r=0}^{s}\binom{s}{r} x^{r}  H_{n-s} \left(x,\, \left. \frac{y}{2^2} \right| s-r\right).
\end{equation} 

\item

\begin{equation} \label{GrindEQ__24_} 
x^{n} =\left[\left(x+{}_y\hat{h}\right)-{}_y\hat{h}\right]^{\, n} \, \theta_0=\, \sum _{r=0}^{n} \binom{n}{r}(-1)^r H_{n-r} (x,y|r) .
\end{equation} 

\item

\begin{equation} \label{GrindEQ__25b_} 
H_{n+m} ( x, y)=(x+{}_y\hat{h})^{m}  (x+{}_y\hat{h})^{n}\; \theta_0= \sum _{r=0}^{m}\binom{m}{r} x^{m-r} H_{n} (x,\, y|r).
\end{equation} 
The last identity is a reformulation of the Nielsen Theorem, concerning the sum of the indices of Hermite polynomials \cite{Nielsen}.
\end{itemize}
\end{property}

The formalism we have just envisaged can be exploited to get a common framework to frame different families of polynomials sporadically in the literature, as discussed below.

\begin{exmp}
	$\forall x,y,\alpha,\beta\in\mathbb{R}$, $\forall n\in\mathbb{N}$, let
\begin{equation} \label{GrindEQ__26b_} 
H_{n} (x,\, y\, |\;\beta ;\alpha )= {}_y\hat{h}^{\;\beta } \left(x+{}_y\hat{h}^{\;\alpha } \right)^{\, n} \theta_0 
\end{equation} 
yields a family of polynomials with generating function

\begin{equation} \label{GrindEQ__27b_} 
\begin{split}
& \sum _{n=0}^{\infty }\frac{t^{n} }{n!}  H_{n} (x,\, y\, |\;\beta ;\alpha )=e^{x\, t} y^{\frac{\beta }{2} } {}_{\left( \frac{1}{2},\frac{1}{2}\right) }e_{\left( \alpha,\;\beta\right) } (y^{\frac{\alpha }{2} } \, t), \\ 
& {}_{\left( \frac{1}{2},\frac{1}{2}\right) }e_{\left( \alpha,\;\beta\right) } (x)=\sum _{r=0}^{\infty }\frac{x^r}{r!}\frac{\Gamma (\alpha \, r+\beta +1) }{\Gamma \left(\frac{\alpha \, r+\beta }{2} +1\right)}  \, \left|\cos \left(\alpha \, r+\beta\right) \frac{ \pi}{2} \right|. \end{split} 
\end{equation} 
Their properties can easily be studied and they are framed within the context of the Sheffer family. They can accordingly be defined through the operational rule

\begin{equation}\label{key}
H_{n} (x,\, y\, |\; \beta ;\alpha )=\, y^{\frac{\beta }{2} } {}_{\left( \frac{1}{2},\frac{1}{2}\right) }e_{\left( \alpha,\;\beta\right) } (y^{\frac{\alpha }{2} } \, \partial _{x} )\, x^{n} .
\end{equation}
\end{exmp}

In the following part of the paper we will discuss further elements characterizing the usefulness of the umbral point of view to the theory of Hermite polynomials.

\section{Umbra and Higher Order Hermite Polynomials}

The higher order Hermite polynomials are defined through the operational identity \cite{Babusci}

\begin{equation}\label{key}
H_{n}^{(m)} (x,y)=e^{y\, \partial _{x}^{m} } x^{n} ,\qquad \forall m\in\mathbb{N},
\end{equation}
 specified by the series

\begin{equation} \label{GrindEQ__30b_} 
H_{n}^{(m)} (x,\, y)=n!\, \sum _{r=0}^{\lfloor\frac{n}{m} \rfloor}\frac{x^{n-m\, r} y^{r} }{(n-m\, r)!\, r!}  
\end{equation} 
with generating function 

\begin{equation}\label{key}
\sum_{n=0}^\infty \frac{t^n}{n!}H_n^{(m)}(x,y)=e^{xt+yt^m}, \qquad \forall t\in\mathbb{R}.
\end{equation}

\begin{obs}
Through our formalism, the higher order Hermite polynomials can be reduced to the $n^{th}$ power of a binomial by introducing the umbral operator ${}_y\hat{h}_m^r$ such that, $\forall r\in\mathbb{R}$,

\begin{equation}\label{key}
\begin{split}
& {}_{y} \hat{h}_m^{r}\; {}_m\theta_0:={}_m\theta_r=\frac{y^{\frac{r}{m} } \, r!}{\Gamma \left(\frac{r}{m} +1\right)} A_{m,r}\; , \\ 
& A_{m,r} =\left\lbrace \begin{array}{l} 1 \\  0 \end{array}\right. \;\; r=ms \;\; \begin{array}{l} s\in\mathbb{Z}\\  s\notin\mathbb{Z} \end{array}
\end{split}
\end{equation}
which, indeed, allows to define them as

\begin{equation}\label{key}
H_{n}^{(m)} (x,y)=(x+{}_{y} \hat{h}_m)^{n} {}_m\theta _{0}
\end{equation}
and that, for $m=2$, provides eq. \eqref{GrindEQ__56_}. It is clearly evident that not too much effort is necessary to study the relevant properties, which can be derived by using the same procedure adopted for the second order case.
\end{obs}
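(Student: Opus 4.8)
The plan is to collapse the umbral/operational definition to the explicit series \eqref{GrindEQ__30b_} by a single binomial expansion, in direct analogy with the second-order case \eqref{GrindEQ__56_}. First I would expand the $n$-th power of the umbral binomial by the ordinary binomial theorem, treating ${}_y\hat{h}_m$ as a commuting indeterminate acting on the vacuum ${}_m\theta_0$, and then apply the defining action ${}_y\hat{h}_m^{r}\,{}_m\theta_0={}_m\theta_r$:
\[
(x+{}_y\hat{h}_m)^{n}\,{}_m\theta_0=\sum_{r=0}^{n}\binom{n}{r}x^{n-r}\,{}_y\hat{h}_m^{r}\,{}_m\theta_0=\sum_{r=0}^{n}\binom{n}{r}x^{n-r}\,{}_m\theta_r.
\]

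Second, I would exploit the selection rule encoded in $A_{m,r}$: since ${}_m\theta_r$ vanishes unless $r$ is an integer multiple of $m$, only the terms with $r=ms$ and $0\le s\le\lfloor n/m\rfloor$ survive. For these the definition gives ${}_m\theta_{ms}=y^{s}(ms)!/s!$, because $y^{r/m}=y^{s}$, $r!=(ms)!$ and $\Gamma(r/m+1)=s!$. Substituting and using $\binom{n}{ms}(ms)!=n!/(n-ms)!$, the factorials cancel and the sum reduces to $n!\sum_{s}\frac{x^{n-ms}y^{s}}{(n-ms)!\,s!}$, which is precisely the right-hand side of \eqref{GrindEQ__30b_}; this establishes $H_n^{(m)}(x,y)=(x+{}_y\hat{h}_m)^{n}\,{}_m\theta_0$. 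The consistency claim at $m=2$ then follows by inspection: there $A_{2,r}=1$ exactly when $r$ is even, which coincides with the value of $\left|\cos(r\pi/2)\right|$ appearing in \eqref{GrindEQ__55_}, so ${}_y\hat{h}_2$ and ${}_2\theta_0$ collapse to ${}_y\hat{h}$ and $\theta_0$, recovering \eqref{GrindEQ__56_}.

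The algebra above is entirely routine; the one point that genuinely requires justification, and the step I expect to be the real crux, is the legitimacy of the umbral binomial expansion itself, namely that ${}_y\hat{h}_m$ may be handled as an ordinary symbol and that its powers act on the vacuum through the prescribed rule. Rather than reprove this I would appeal to the umbral framework set up in Sec.~\ref{UMNDF} and in the cited references, where the same manipulation already underlies \eqref{GrindEQ__56_}. I would also remark that, although the operator is declared for all real $r$, the binomial expansion only ever generates the integer powers $r=0,\dots,n$, so no question about non-integer exponents actually enters the argument.
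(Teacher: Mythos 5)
Your proof is correct and follows exactly the route the paper intends: the paper offers no explicit derivation for this observation, simply asserting that it follows ``by the same procedure adopted for the second order case,'' and your binomial expansion with the selection rule $A_{m,r}$ killing all terms except $r=ms$, followed by the factorial cancellation $\binom{n}{ms}(ms)!/s!=n!/\bigl((n-ms)!\,s!\bigr)$, is precisely that procedure and reproduces the series \eqref{GrindEQ__30b_}. Your consistency check at $m=2$ against $\left|\cos(r\pi/2)\right|$ in \eqref{GrindEQ__55_} and your remark that only integer powers of the umbra are ever invoked are both accurate.
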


\begin{obs}
 We can combine the Hermite umbral operators to get further generalizations, as for the three variable third order Hermite polynomials which, according to the previous formalism, can be defined as

\begin{equation}\label{key}
H_{n}^{(3)} (x,\, y,z)=(x+ {}_{y} \hat{h}_{2} +{}_{z} \hat{h}_3)^{n} {}_2\theta_{0,y} \;{}_3\theta_{0,z} \;.
\end{equation}
 Thereby we find
 
\begin{equation}\label{key}
\begin{split}
& H_{n}^{(3)} (x,\, y,z)=\sum _{s=0}^{n}\binom{n}{s}x^{n-s}\;{}_{y,z} \hat{h}_{2,3}^{s}\;   {}_2\theta_{0,y} \;{}_3\theta_{0,z},\\ 
& {}_{y,z} \hat{h}_{2,3}^{s}=\left( {}_{y} \hat{h}_{2} +{}_{z} \hat{h}_3\right)^s =\sum _{r=0}^{s}\binom{s}{r}\;
 {}_z \hat{h}_{3}^{s-r}  {}_{y} \hat{h}_{2}^{r}\;.
 \end{split}
\end{equation}
Obviously, the same result can be obtained through the other way

\begin{equation}\label{key}
H_{n}^{(3)} (x,\, y,z)=\sum _{s=0}^{n}\binom{n}{s} H_{n-s}(x,y) \;{}_{z} \hat{h}_3^s \;{}_3\theta_{0,z}.
\end{equation}
\end{obs}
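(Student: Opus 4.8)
The plan is to anchor the umbral definition to the generating function \eqref{eq87} and then unfold the two stated expansions by repeated use of the ordinary binomial theorem, the whole argument resting on the mutual commutativity of the two umbrae. First I would multiply the candidate expression $(x+{}_y\hat{h}_2+{}_z\hat{h}_3)^n\,{}_2\theta_{0,y}\,{}_3\theta_{0,z}$ by $t^n/n!$ and sum over $n$, producing the exponential $e^{(x+{}_y\hat{h}_2+{}_z\hat{h}_3)t}\,{}_2\theta_{0,y}\,{}_3\theta_{0,z}$. Since ${}_y\hat{h}_2$ and ${}_z\hat{h}_3$ act on independent vacua labelled by the distinct variables $y$ and $z$, they commute and the exponential factorizes as $e^{xt}\bigl(e^{{}_y\hat{h}_2 t}\,{}_2\theta_{0,y}\bigr)\bigl(e^{{}_z\hat{h}_3 t}\,{}_3\theta_{0,z}\bigr)$. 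Evaluating each umbral factor term by term through the defining action ${}_y\hat{h}_m^r\,{}_m\theta_0={}_m\theta_r$, only the powers $r$ that are multiples of $m$ survive, giving $e^{{}_y\hat{h}_2 t}\,{}_2\theta_{0,y}=\sum_s (y\,t^2)^s/s!=e^{y\,t^2}$ and likewise $e^{{}_z\hat{h}_3 t}\,{}_3\theta_{0,z}=e^{z\,t^3}$. The product is $e^{x\,t+y\,t^2+z\,t^3}$, exactly \eqref{eq87}; matching coefficients of $t^n/n!$ establishes the first displayed identity.

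For the second identity I would set ${}_{y,z}\hat{h}_{2,3}:={}_y\hat{h}_2+{}_z\hat{h}_3$ and, since $x$ is an ordinary scalar commuting with both umbrae, apply the binomial theorem to $(x+{}_{y,z}\hat{h}_{2,3})^n$ to obtain $\sum_{s=0}^n\binom{n}{s}x^{n-s}\,{}_{y,z}\hat{h}_{2,3}^s$; letting the vacuum factors act then yields its first line. The second line is a further binomial expansion of $({}_y\hat{h}_2+{}_z\hat{h}_3)^s$, legitimate precisely because the two umbrae commute, so that no ordering correction appears.

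For the ``other way'' I would instead keep $x+{}_y\hat{h}_2$ as a single block and expand $\bigl((x+{}_y\hat{h}_2)+{}_z\hat{h}_3\bigr)^n=\sum_{s=0}^n\binom{n}{s}(x+{}_y\hat{h}_2)^{n-s}\,{}_z\hat{h}_3^{s}$, again by the commutativity of ${}_z\hat{h}_3$ with that block. Letting ${}_2\theta_{0,y}$ act on $(x+{}_y\hat{h}_2)^{n-s}$ reproduces the second order Hermite polynomial $H_{n-s}(x,y)$ via \eqref{GrindEQ__56_}, while ${}_z\hat{h}_3^{s}\,{}_3\theta_{0,z}$ is left untouched, which is the third displayed identity. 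As an independent check, using ${}_z\hat{h}_3^{s}\,{}_3\theta_{0,z}={}_3\theta_s$ (nonzero only for $s=3r$) collapses this sum to $n!\sum_r z^r H_{n-3r}(x,y)/\bigl[r!\,(n-3r)!\bigr]$, which is exactly the defining series \eqref{GrindEQ__17_}.

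The hard part will be the rigorous justification that ${}_y\hat{h}_2$ and ${}_z\hat{h}_3$ genuinely commute and that each acts only on its own vacuum, so that the exponential factorizes and every binomial expansion is free of ordering terms; this is exactly the point where the formalism of ref. \cite{SLicciardi} must be invoked. Once that independence is granted, all the remaining manipulations are purely algebraic and routine.
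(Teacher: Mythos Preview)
Your proposal is correct and in fact supplies considerably more detail than the paper does: in the paper this Observation is simply \emph{stated}, with the identities presented as immediate consequences of the umbral definition and the binomial theorem, without any separate proof. Your generating-function verification (matching against \eqref{eq87}) and the closing consistency check against \eqref{GrindEQ__17_} are useful additions that the paper leaves implicit; the core mechanism---commutativity of the two umbrae acting on independent vacua, followed by binomial expansion---is exactly what the paper has in mind.
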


\begin{obs}
The extension of the method to bilateral generating functions is quite straightforward too. We consider indeed the generating function

\begin{equation} \label{GrindEQ__35_} 
G_H(x,\, y;\, z,\, w|\, t)=\sum _{n=0}^{\infty }\frac{t^{n} }{n!}  H_{n} (x,\, y)\, H_{n} (z,\, w)=
\sum _{n=0}^{\infty }\frac{t^{n} }{n!}  (x+{}_y\hat{h}\,)^{n} H_{n} (z,\, w)\;\theta_{0,y} =
e^{\;z\; t\;(x+{}_y\hat{h})+w\; \left(t\;(x+ {}_y\hat{h} ) \right)^{\, 2}} \theta_{0,y}.
\end{equation} 
The use of our technique yields 

\begin{equation}\label{key}
G_H(x,\, y;\, z,\, w|\, t)=e^{\;x\;t\;(z\;+\;w\;x\;t)}
\frac{1}{\sqrt{1-4\;w\;t^{2} \;y} }
 e^{\frac{\left( z\;+\;2\;w\;t\;x\right) ^2\, t^{2} y}{1-4\, w\; t^{2}\; y} }, \qquad \mid 4\, w\; t^{2}\; y \mid<1.
\end{equation}
\end{obs}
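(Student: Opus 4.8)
The plan is to start from the umbral image of the first Hermite factor and collapse the $n$-sum using the ordinary generating function \eqref{genfH}. Writing $H_n(x,y)=(x+{}_y\hat{h})^n\theta_{0,y}$ via \eqref{GrindEQ__56_}, the series becomes $\sum_n \frac{t^n}{n!}(x+{}_y\hat{h})^n H_n(z,w)\theta_{0,y}$. Treating ${}_y\hat{h}$ as an ordinary parameter and setting $\tau=t(x+{}_y\hat{h})$, the inner $n$-sum is $\sum_n \frac{\tau^n}{n!}H_n(z,w)=e^{z\tau+w\tau^2}$, which reproduces the middle expression $G_H=e^{zt(x+{}_y\hat{h})+w t^2(x+{}_y\hat{h})^2}\theta_{0,y}$ displayed in the statement.

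Next I would expand the exponent, separating the $\hat{h}$-free part from the linear and quadratic umbral terms. Using $(x+{}_y\hat{h})^2=x^2+2x\,{}_y\hat{h}+{}_y\hat{h}^2$, the exponent equals $xt(z+wtx)+{}_y\hat{h}\,t(z+2wtx)+{}_y\hat{h}^2\,wt^2$. The first group is a scalar and factors out as $e^{xt(z+wtx)}$, leaving the reduced object $e^{{}_y\hat{h}A+{}_y\hat{h}^2 B}\theta_{0,y}$ with $A=t(z+2wtx)$ and $B=wt^2$ to be evaluated.

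The crux is handling the combined linear-plus-quadratic umbral exponential, and this is exactly where the Gaussian (Hubbard--Stratonovich) linearization already employed just after \eqref{eq80} comes into play. I would write $e^{{}_y\hat{h}^2 B}=\frac{1}{\sqrt{\pi}}\int_{-\infty}^{\infty}e^{-\xi^2+2\xi\,{}_y\hat{h}\sqrt{B}}\,d\xi$, so that $e^{{}_y\hat{h}A+{}_y\hat{h}^2 B}\theta_{0,y}=\frac{1}{\sqrt{\pi}}\int_{-\infty}^{\infty}e^{-\xi^2}e^{{}_y\hat{h}(A+2\xi\sqrt{B})}\,d\xi\,\theta_{0,y}$, and then apply the fundamental umbral evaluation \eqref{eq80}, $e^{{}_y\hat{h}z}\theta_{0,y}=e^{yz^2}$, to turn the integrand into the pure Gaussian $e^{y(A+2\xi\sqrt{B})^2}$.

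What remains is the elementary $\xi$-integral $\frac{1}{\sqrt{\pi}}\int e^{-(1-4yB)\xi^2+4yA\sqrt{B}\,\xi+yA^2}\,d\xi$, which by the standard formula $\int e^{-a\xi^2+b\xi}d\xi=\sqrt{\pi/a}\,e^{b^2/4a}$ produces $\frac{1}{\sqrt{1-4yB}}e^{yA^2/(1-4yB)}$; convergence of this Gaussian requires $1-4yB>0$, i.e. $|4wt^2 y|<1$, which is precisely the stated domain. Substituting back $A=t(z+2wtx)$ and $B=wt^2$ gives $1-4yB=1-4wt^2y$ and $yA^2=yt^2(z+2wtx)^2$, so multiplying by the prefactor $e^{xt(z+wtx)}$ yields the claimed closed form. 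I expect no serious obstacle beyond the algebraic bookkeeping; the only conceptual step is recognizing that the quadratic umbral term must be linearized by the auxiliary Gaussian integral before \eqref{eq80} can be applied.
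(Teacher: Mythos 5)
Your proposal is correct and follows essentially the same route as the paper: collapse the $n$-sum via \eqref{genfH} with the umbral argument $t(x+{}_y\hat{h})$, separate the scalar part of the exponent, and evaluate $e^{{}_y\hat{h}A+{}_y\hat{h}^2B}\theta_{0}$ by the Gaussian linearization of the quadratic umbral term together with \eqref{eq80} — which is precisely the operational identity $e^{{}_y\hat{h}^2t+2{}_y\hat{h}xt}\theta_0=\frac{1}{\sqrt{1-4ty}}e^{\frac{4(tx)^2y}{1-4ty}}$ the paper establishes just before this observation. The algebra, the final closed form, and the convergence condition $|4wt^2y|<1$ all check out.
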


Exotic generating functions involving e.g. products of Laguerre and Hermite polynomials can also be obtained and will be discussed elsewhere.\\

 The use of umbral methods looks much promising to develop a new point of view on the theory of special polynomials and of special functions as well.  Just to provide a flavor of the directions along which may develop future speculations, we consider the definition of the following umbral operator.
 
 \begin{defn}\label{HTheta}
 $\forall x,y\in\mathbb{R}$, $\forall n\in\mathbb{N}$, we get the position
 
\begin{equation}\label{key}
\hat{H}_{(x,y)}^{n}\; \Theta_0=H_{n} (x,\, y).
\end{equation}
 \end{defn}

\begin{prop}
According to Definition \ref{HTheta} we can write

1) \begin{equation} \label{GrindEQ__38b_} 
e^{t\, \hat{H}_{(x,y)} } \;\Theta_0=e^{x\, t+y\, t^{2} }  
\end{equation} 
and 

2) \begin{equation}\label{GrindEQ__39b_}
\sum _{n=0}^{\infty }\frac{t^{n} }{n!} H_{2\, n}  (x,\, y)=\frac{1}{\sqrt{1-4\, y\, t} } e^{\frac{x^{2} t}{1-4\, y\, t} } .
\end{equation}
\end{prop}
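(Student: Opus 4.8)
The plan is to read both identities directly off Definition \ref{HTheta}, handling the umbral symbol $\hat{H}_{(x,y)}$ by the same \emph{principle of permanence} used throughout the paper: it is treated as an ordinary parameter, with the vacuum $\Theta_0$ restored at the end through $\hat{H}_{(x,y)}^{n}\Theta_0=H_n(x,y)$. For part 1) I would simply expand the exponential and apply the definition term by term,
\begin{equation*}
e^{t\,\hat{H}_{(x,y)}}\Theta_0=\sum_{n=0}^{\infty}\frac{t^n}{n!}\hat{H}_{(x,y)}^{n}\Theta_0=\sum_{n=0}^{\infty}\frac{t^n}{n!}H_n(x,y)=e^{xt+yt^2},
\end{equation*}
the last equality being precisely the Hermite generating function \eqref{genfH}. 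So the first identity is nothing but the statement that the umbral exponential of $\hat{H}_{(x,y)}$ reproduces, on the vacuum, the ordinary generating function.

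For part 2) the idea is to recognise the lacunary sum as an exponential in the square of the umbra. Writing $\hat{H}_{(x,y)}^{2n}=(\hat{H}_{(x,y)}^{2})^{n}$ gives
\begin{equation*}
\sum_{n=0}^{\infty}\frac{t^n}{n!}H_{2n}(x,y)=\sum_{n=0}^{\infty}\frac{t^n}{n!}\hat{H}_{(x,y)}^{2n}\Theta_0=e^{\,\hat{H}_{(x,y)}^{2}\,t}\,\Theta_0,
\end{equation*}
and I would then linearise the square by the Gaussian identity $e^{a^2}=\frac{1}{\sqrt{\pi}}\int_{-\infty}^{+\infty}e^{-\xi^2+2a\xi}\,d\xi$ with $a=\hat{H}_{(x,y)}\sqrt{t}$, exactly as done for the umbra ${}_y\hat{h}$ around \eqref{eq80}. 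The decisive step is that the inner factor $e^{2\sqrt{t}\,\xi\,\hat{H}_{(x,y)}}\Theta_0$ is disposed of by part 1) with $t$ replaced by $2\sqrt{t}\,\xi$, giving $e^{2x\sqrt{t}\,\xi+4yt\,\xi^2}$; the remaining $\xi$-integral is then elementary,
\begin{equation*}
\frac{1}{\sqrt{\pi}}\int_{-\infty}^{+\infty}e^{-(1-4yt)\,\xi^2+2x\sqrt{t}\,\xi}\,d\xi=\frac{1}{\sqrt{1-4yt}}\,e^{\frac{x^2 t}{1-4yt}},\qquad |t|<\frac{1}{4|y|},
\end{equation*}
which is exactly \eqref{GrindEQ__39b_} and recovers the Doetsch rule \eqref{GrindEQ__53_}.

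The main obstacle is not the algebra but the justification of the formal passages: one has to argue that the symbol $\hat{H}_{(x,y)}$ may be carried inside the convergent Gaussian integral and that part 1) applies with the continuous parameter $2\sqrt{t}\,\xi$ in place of the scalar $t$. This is the same licence already invoked in \eqref{eq80} and in the Bessel--Gaussian manipulations of Sec. \ref{UMNDF}, and the condition $1-4yt>0$ is precisely what makes the final Gaussian integral converge, thereby fixing the domain of validity $|t|<\frac{1}{4|y|}$.
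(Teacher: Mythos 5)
Your proposal is correct and follows essentially the same route as the paper: part 1) by termwise expansion against the generating function \eqref{genfH}, and part 2) by writing the lacunary sum as $e^{t\,\hat{H}_{(x,y)}^{2}}\Theta_0$, linearising the square with the Gaussian integral, applying part 1) inside the integrand, and evaluating the resulting Gaussian integral under $1-4yt>0$. Your added remarks on justifying the formal interchanges only make explicit what the paper leaves implicit.
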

\begin{proof}
1) \begin{equation*}\label{key}
e^{t\, \hat{H}_{(x,y)} } \;\Theta_0 =\sum_{r=0}^\infty \frac{t^r}{r!}\hat{H}_{(x,y)}^{r}\; \Theta_0=\sum_{r=0}^\infty \frac{t^r}{r!}H_r(x,y)=e^{x\, t+y\, t^{2} }  .
\end{equation*}	
2) \begin{equation*}  
\sum _{n=0}^{\infty }\frac{t^{n} }{n!} H_{2\, n}  (x,\, y)=
e^{t\, \hat{H}_{(x,y)}^{2} } \;\Theta_0=
\frac{1}{\sqrt{\, \pi } } \int _{-\infty }^{+\infty }e^{-\xi ^{2} +2\, \sqrt{t}\;\hat{H}_{(x,y)}\;\xi }  d\xi \, \Theta_0=
\frac{1}{\sqrt{\, \pi } } \int _{-\infty }^{+\infty }e^{-(1-4\, y\, \, t)\, \xi ^{2} +2 \, x\sqrt{t}\;\xi }  d\xi \, =\frac{1}{\sqrt{1-4\, y\, t} } e^{\frac{x^{2} t}{1-4\, y\, t} } .
\end{equation*} 
\end{proof}

\begin{exmp}
Let us now consider the following integral

\begin{equation}\label{intzH}
I\left(x,y\right)=\int _{-\infty }^{+\infty }e^{-z^{2} \hat{H}_{(x,-y)} \, }  dz\, \Theta_0
\end{equation}
 equivalent to

\begin{equation}\label{intzHb}
 I(x,y)=\int_{-\infty }^{\infty }e^{-x\, z ^{2}-y\, z^4 }  d\, z\, =\, \int_{0}^{\infty }e^{-x\, s\, -y\;s^{2}}  s^{-\, \frac{1}{2} } ds .
\end{equation}
The formal solution of the integral in eq. \eqref{intzH} is

\begin{equation} \label{GrindEQ__41_} 
I\left(x,y\right)=\sqrt{\pi } \, \hat{H}_{(x,-y)}^{-\, \frac{1}{2} }\; \Theta_0 .
\end{equation} 
We are therefore faced with the necessity of specifying the meaning of the Hermite umbral raised to a negative fractional power. The use of standard Laplace transform methods yields

\begin{equation} \label{GrindEQ__42_} 
\hat{H}_{(x,-y)}^{-\frac{1}{2} } \;\Theta_0=\frac{1}{\Gamma \left(\frac{1}{2} \right)} \int _{0}^{\infty }e^{-s\hat{H}_{(x,-y)} } s^{-\, \frac{1}{2} }  ds\;\Theta_0=\frac{1}{\sqrt{\pi } } \int _{0}^{\infty }e^{-x\, s\, -y\;s^{2} }  s^{- \frac{1}{2} } ds 
\end{equation} 
which correctly reproduces eq. \eqref{intzHb} and that, through the identity \cite{SLicciardi,HermCalculus}

\begin{equation}\label{key}
H_n(x,-y)=(2y)^{-\frac{n}{2}}e^{\frac{x^2}{8y}}D_{-n}\left(\frac{x}{\sqrt{2y}} \right) 
\end{equation}
involving the parabolic cylinder functions $D_n(x)$, provides the solution of the integrals \eqref{intzH}-\eqref{intzHb}
\begin{equation}\label{key}
I(x,y)=\pi\; (2y)^{-\frac{1}{4}}e^{\frac{x^2}{8y}}D_{-\frac{1}{2}}\left(\frac{x}{\sqrt{2y}} \right) .
\end{equation} 
\end{exmp}
This is quite a significant result which ensures that the formalism has a very high level of flexibility.

\begin{obs}
 It is also interesting to note that the ``Fourier'' transform 

\begin{equation}\label{GrindEQ__43b_}
\hat{F}_{(k,\;\beta)}\;[f(\hat{H})]\;\Phi_0=\int_{-\infty}^\infty \hat{f}(x)e^{\;i\;x\;\hat{H}_{(k,\, \beta) }}dx\;\Theta
\end{equation}
can be viewed as a kind of Gabor transform \cite{Gabor}. 
\end{obs}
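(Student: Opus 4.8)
The plan is to strip the operator structure from the right-hand side by letting $e^{i\,x\,\hat{H}_{(k,\beta)}}$ act on the Hermite vacuum and invoking the fundamental disentanglement identity \eqref{GrindEQ__38b_}, $e^{t\,\hat{H}_{(x,y)}}\Theta_0=e^{x\,t+y\,t^2}$. First I would interchange the $x$-integration in \eqref{GrindEQ__43b_} with the umbral action, which is permitted by the ``principle of permanence'' adopted throughout: the umbral symbol $\hat{H}_{(k,\beta)}$ is carried through the integral as an ordinary constant, so that the only operator-valued object left is the exponential $e^{i\,x\,\hat{H}_{(k,\beta)}}\,\Theta_0$.

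Next I would apply \eqref{GrindEQ__38b_} with the substitution $t\mapsto i\,x$ and Hermite arguments $(k,\beta)$, obtaining
\[
e^{i\,x\,\hat{H}_{(k,\beta)}}\,\Theta_0=e^{k\,(i\,x)+\beta\,(i\,x)^{2}}=e^{\,i\,k\,x-\beta\,x^{2}}.
\]
This is the decisive step: the operator exponential collapses into an ordinary Gaussian $e^{-\beta\,x^{2}}$ multiplied by the oscillatory factor $e^{\,i\,k\,x}$. Substituting back reduces the transform to
\[
\hat{F}_{(k,\beta)}\,[f(\hat{H})]\,\Theta_0=\int_{-\infty}^{\infty}\hat{f}(x)\,e^{-\beta\,x^{2}}\,e^{\,i\,k\,x}\,dx,
\]
which is exactly the Fourier transform of $\hat{f}$ weighted by the Gaussian window $e^{-\beta\,x^{2}}$, with $k$ the conjugate (frequency) variable and $\beta$ fixing the window width. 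This is precisely the functional form of a Gaussian-windowed, i.e. Gabor, transform \cite{Gabor}, which is the content of the Observation.

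The algebra is immediate once \eqref{GrindEQ__38b_} is in hand, so the genuine work is analytic and interpretive. I would require $\mathrm{Re}(\beta)>0$ so that the window $e^{-\beta\,x^{2}}$ is integrable and the series manipulation underlying \eqref{GrindEQ__38b_} converges, thereby legitimising the interchange of integration and umbral action. The remaining subtlety --- the reason the statement reads ``a kind of'' Gabor transform rather than an equality --- is that the canonical short-time Fourier transform carries a \emph{translated} window $g(\tau-b)$, whereas the window produced here sits at the origin; the missing translation parameter is recovered by allowing the first Hermite argument $k$ to absorb a real spatial shift, after which the correspondence with the windowed-Fourier family becomes exact. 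I would close by noting that the umbra $\hat{H}_{(k,\beta)}$ thus simultaneously encodes the frequency and the Gaussian window, which is the structural observation being made.
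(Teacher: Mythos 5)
The paper offers no proof of this Observation at all --- it is stated as a bare remark --- so your derivation is not competing with an argument in the text but supplying the one the authors omitted. Your route is the natural one and it is correct: applying \eqref{GrindEQ__38b_} with $t=i\,x$ and arguments $(k,\beta)$ collapses $e^{\,i\,x\,\hat{H}_{(k,\beta)}}\,\Theta_0$ to $e^{\,i\,k\,x-\beta\,x^{2}}$, exhibiting the transform as a Fourier integral against a Gaussian window of width set by $\beta$, which is exactly the sense in which it is ``a kind of'' Gabor transform; the convergence condition $\mathrm{Re}(\beta)>0$ is the right one to impose. The only imprecision is your closing remark about recovering the window translation: since $k$ enters as $e^{\,i\,k\,x}$, a \emph{real} shift of $k$ moves the frequency, not the window centre; to relocate the Gaussian to $e^{-\beta(x-b)^{2}}$ you must let $k$ acquire an imaginary part (completing the square shows $k\mapsto k+2i\beta b$ does the job, up to a constant factor). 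This is a cosmetic point in an otherwise sound justification.
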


 The technique we have discussed in the paper is tightly bound to the method of quasi-monomials developed in ref. \cite{Germano,Bell}, provided that we make the following identification
 
\begin{equation} \label{GrindEQ__44_} 
\left(x+{}_y\hat{h}\right)\to \left(x+2\, y\, \partial _{x} \right).
\end{equation} 
 The differential operator on the left is used to define the Hermite polynomials as
 
\begin{equation}\label{key}
H_{n} (x,y)=\left(x+2\, y\, \partial _{x} \right)^{n} 1.
\end{equation}

There are certain advantages offered by the umbral method with respect to the monomiality technique which are all associated with the fact that in the former case one  deals with commuting operators.

\section{Generalized Heat Polynomials and Final Comments}

 The generalized heat polynomials ($GHP$) \cite{Nasim,Rosenbloom} are known to satisfy the $PDE$
 
\begin{equation} \label{GrindEQ__45_} 
\left\lbrace \begin{split}
& \partial _{y} P_{n,\nu } (x,y)=\partial _{x}^{2} P_{n,\nu } (x,y)+\frac{2\, \nu }{x} \partial _{x} P_{n,\nu } (x,y), \\ 
& P_{n,\nu } (x,\, 0)=\frac{\Gamma \left(\nu +\frac{1}{2} \right)}{\Gamma \left(\nu +\frac{1}{2} +n\right)} x^{2\, n} 
 \end{split} \right. 
\end{equation} 
which is an extension of the ordinary diffusion equation. They are defined as

\begin{equation} \label{GrindEQ__46b_} 
P_{n,\, \nu } (x,y)=\Gamma \left(\nu +\frac{1}{2} \right)\, \sum _{r=0}^{n}\binom{n}{r}\, \frac{x^{2\, (n-r)} (4\, y)^{r} }{\Gamma \left(\nu +\frac{1}{2} +n-r\right)}  
\end{equation} 
and, for particular values of the index $\nu$, they can be espressed in terms of known polynomials, as noted below. \\

\begin{itemize}
\item  \textit{Case 1}\\
 For $\nu =\frac{1}{2}$ the polynomials \eqref{GrindEQ__46b_} reduce to 

\begin{equation} \label{GrindEQ__47_} 
P_{n,\, \frac{1}{2} } (x,y)=\, n!\, \sum _{r=0}^{n}\frac{x^{2\, (n-r)} (4\, y)^{r} }{r!\left[(n-r)!\right]^{2} }   =L_{n} \left(-x^2,4\, y \right) .
\end{equation} 

\item \textit{Case 2}\\
For $\nu =0$ we find the following correspondence in terms of Hermite polynomials

\begin{equation}\label{key}
P_{n,\, 0} (x,y)=4^n \frac{n!}{(2n)!}H_{2\, n} (x , y)   .
\end{equation}   
\end{itemize}
The relevant restyling in umbral terms could be particularly useful to derive straightforwardly further properties. 

\begin{defn}
We accordingly define the $GHP$ by using the following binomial image $\forall x,y,\nu\in\mathbb{R}$, $\forall n\in\mathbb{N}$ 

\begin{equation} \label{GrindEQ__49_} 
\begin{split}
&P_{n,\, \nu } (x,y)=\, (x^{2}\, \hat{d}_{\nu }+4\;y )^{n} \;\eta_0, \\ 
& \hat{d}_{\nu }^{r}\; \eta_0=\frac{\Gamma \left(\nu +\frac{1}{2} \right)}{\Gamma \left(\nu +\frac{1}{2} +r\right)} .
 \end{split} 
\end{equation} 
\end{defn}

A fairly straightforward consequence of such a procedure is the derivation of the relevant generating functions.

\begin{prop}
$\forall x,y,\nu,t\in\mathbb{R}$

\begin{equation}\label{GrindEQ__50_} 
\sum _{n=0}^{\infty }\frac{t^{n} }{n!}  P_{n,\, \nu } (x,y)=
\Gamma \left(\nu +\frac{1}{2} \right)e^{\;4\;y\;t } C_{\nu -\frac{1}{2} } \left(-\;t\;x^2\right)
\end{equation}
where $C_s(x)$ is the Tricomi-Bessel function in eq. \eqref{GrindEQ__23_}.
\end{prop}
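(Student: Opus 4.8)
The plan is to read the left-hand side as an exponential of the umbral operator, use the commutativity that the umbral formalism of Sec. \ref{UMNDF} guarantees to peel off the Gaussian-in-$t$ factor, and then match the residual power series term-by-term against the Tricomi-Bessel series of eq. \eqref{GrindEQ__23_}. Because the construction lives entirely inside the umbral calculus, where $\hat{d}_\nu$ may be treated as an ordinary constant under summation, no analytic estimates are required and the only genuine work is the bookkeeping of Gamma-function indices.

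First I would substitute the binomial image $P_{n,\nu}(x,y)=(x^2\hat{d}_\nu+4y)^n\,\eta_0$ of eq. \eqref{GrindEQ__49_} into the generating sum. Summing $\sum_{n=0}^\infty \frac{t^n}{n!}(x^2\hat{d}_\nu+4y)^n$ collapses it to $e^{\,t(x^2\hat{d}_\nu+4y)}\eta_0$. Since $4y$ is an ordinary scalar while $x^2\hat{d}_\nu$ acts only on the vacuum $\eta_0$, the two contributions commute and the exponential factors as $e^{4yt}\,e^{\,t x^2\hat{d}_\nu}\eta_0$. Expanding the surviving umbral exponential as $\sum_{r=0}^\infty \frac{(tx^2)^r}{r!}\hat{d}_\nu^r\eta_0$ and inserting the defining action $\hat{d}_\nu^r\eta_0=\Gamma(\nu+\tfrac12)/\Gamma(\nu+\tfrac12+r)$ yields, after factoring out the common $\Gamma(\nu+\tfrac12)$, the expression $\Gamma(\nu+\tfrac12)\,e^{4yt}\sum_{r=0}^\infty \frac{(tx^2)^r}{r!\,\Gamma(\nu+\tfrac12+r)}$.

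The closing step is to recognize the residual series as a Tricomi-Bessel function. Setting the order $s=\nu-\tfrac12$ in $C_s(x)=\sum_{r=0}^\infty \frac{(-x)^r}{r!\,(r+s)!}$ from eq. \eqref{GrindEQ__23_} turns the denominator factorial $(r+s)!$ into $\Gamma(r+\nu+\tfrac12)$, and substituting the argument $x\mapsto -t x^2$ flips the sign so that $(-x)^r$ becomes $(tx^2)^r$. Hence $\sum_{r=0}^\infty \frac{(tx^2)^r}{r!\,\Gamma(\nu+\tfrac12+r)}=C_{\nu-\tfrac12}(-tx^2)$ exactly, and the claimed generating function follows. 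I expect this matching to be the only delicate point: one must confirm that $(r+s)!=\Gamma(r+\nu+\tfrac12)$ under $s=\nu-\tfrac12$ and track the sign carefully, so that the Gaussian factor $e^{4yt}$ and the prefactor $\Gamma(\nu+\tfrac12)$ emerge precisely as in eq. \eqref{GrindEQ__50_}.
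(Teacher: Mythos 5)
Your proposal is correct and follows essentially the same route as the paper's proof: substitute the umbral binomial image $P_{n,\nu}(x,y)=(x^{2}\hat{d}_{\nu}+4y)^{n}\eta_{0}$, sum to the exponential $e^{4yt}e^{x^{2}t\hat{d}_{\nu}}\eta_{0}$, and identify the remaining series with $\Gamma\left(\nu+\frac{1}{2}\right)C_{\nu-\frac{1}{2}}(-tx^{2})$. The paper states this as a one-line chain, whereas you spell out the series expansion and the Gamma-function bookkeeping, which is the same argument in more detail.
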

\begin{proof}
\begin{equation*}\label{key}
\sum _{n=0}^{\infty }\frac{t^{n} }{n!}  P_{n,\, \nu } (x,y)=
\sum _{n=0}^{\infty }\frac{t^{n} }{n!}  (x^{2}\, \hat{d}_{\nu }+4\;y )^{n}\; \eta_0=
e^{\;4\;y\;t} e^{\;x^2\;t\;\hat{d}_{\nu } }\; \eta_0=
\Gamma \left(\nu +\frac{1}{2} \right)e^{\;4\;y\;t } C_{\nu -\frac{1}{2} } \left(-\;t\;x^2\right).
\end{equation*}
\end{proof}

In analogous way we can prove that

\begin{equation}\label{GrindEQ__51_} 
\sum _{n=0}^{\infty }t^{n}  P_{n,\, \nu } (x,y)=\frac{\Gamma \left(\nu +\frac{1}{2} \right)}{1-4yt}E_{\left( 1,\;\nu+\frac{1}{2}\right) }\left( \frac{x^2t}{1-4yt}\right) .
\end{equation}
where $E_{\alpha } (x)$ is the Mittag-Leffler function in eq. \eqref{GrindEQ__42b_}.

\begin{obs}
 By recalling that the Tricomi function is an eigenfunction of the operator \cite{Tricomi,Airy} 

\begin{equation}\label{key}
\hat{T}=\partial _{x} x\, \partial _{x} +\alpha \, \partial _{x} 
\end{equation}
in the sense that

\begin{equation}\label{key}
\hat{T}C_{\alpha } (\lambda \, x)=-\lambda \, C_{\alpha } (\lambda \, x),
\end{equation}
we find, from the eq. \eqref{GrindEQ__50_}, the identity 

\begin{equation}\label{ppn}
 \partial _{y} P_{n,\nu } (x,y)=\frac{2}{x}
\left[\dfrac{1}{2}\;{}_l\partial_x +\left( \nu-\frac{1}{2}\right) \partial_x
 \right]P_{n,\nu } (x,y)  
\end{equation}
where ${}_l\partial_x=\partial_x x\partial_x$ is the Laguerre derivative \cite{Babusci}. Eq. \eqref{ppn} is complementary to eq. \eqref{GrindEQ__45_}.
\end{obs}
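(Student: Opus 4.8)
The plan is to prove \eqref{ppn} at the level of the generating function \eqref{GrindEQ__50_}, so that the statement for each $P_{n,\nu}(x,y)$ follows by comparing coefficients of $t^n/n!$. Writing $G(x,y;t):=\sum_{n=0}^\infty \frac{t^n}{n!}P_{n,\nu}(x,y)=\Gamma\!\left(\nu+\frac12\right)e^{4yt}C_{\nu-\frac12}(-tx^2)$ and differentiating in $y$ gives immediately $\partial_y G=4t\,G$. The task therefore reduces to showing that the differential operator on the right-hand side of \eqref{ppn}, which acts only on the $x$ variable, reproduces the same factor $4t$ when applied to $G$; equivalently, that $\frac{2}{x}\left[\frac12\,{}_l\partial_x+\left(\nu-\frac12\right)\partial_x\right]C_{\nu-\frac12}(-tx^2)=4t\,C_{\nu-\frac12}(-tx^2)$.

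First I would recast this $x$-operator in the natural variable of the Tricomi function. Setting $w=x^2$, so that $\partial_w=\frac{1}{2x}\partial_x$, a short operator computation gives $\partial_w w\,\partial_w=\frac{1}{4x}\,\partial_x x\,\partial_x=\frac{1}{4x}\,{}_l\partial_x$, whence
$$\frac{2}{x}\left[\frac12\,{}_l\partial_x+\left(\nu-\tfrac12\right)\partial_x\right]=4\left[\partial_w w\,\partial_w+\left(\nu-\tfrac12\right)\partial_w\right]=4\,\hat{T},$$
where $\hat T=\partial_w w\,\partial_w+\alpha\,\partial_w$ with $\alpha=\nu-\frac12$ is exactly the Tricomi operator. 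The eigenfunction property $\hat T\,C_\alpha(\lambda w)=-\lambda\,C_\alpha(\lambda w)$, applied with $\lambda=-t$, then yields $\hat T\,C_{\nu-\frac12}(-tw)=t\,C_{\nu-\frac12}(-tw)$, so that the $x$-operator acting on $C_{\nu-\frac12}(-tx^2)$ returns $4t\,C_{\nu-\frac12}(-tx^2)$, as required.

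Combining the two computations gives $\partial_y G=4t\,G=\frac{2}{x}\left[\frac12\,{}_l\partial_x+\left(\nu-\frac12\right)\partial_x\right]G$; expanding both sides in powers of $t$ and matching the coefficient of $t^n/n!$ delivers \eqref{ppn}. I expect the only delicate point to be the operator algebra in the change of variables $w=x^2$, in particular verifying $\partial_w w\,\partial_w=\frac{1}{4x}\,{}_l\partial_x$ by careful composition rather than naive substitution. As a self-contained cross-check, one can bypass the Tricomi machinery entirely by expanding the Laguerre derivative directly: since $\partial_x x\,\partial_x=\partial_x+x\,\partial_x^2$ one finds
$$\frac{2}{x}\left[\frac12\,\partial_x x\,\partial_x+\left(\nu-\frac12\right)\partial_x\right]=\partial_x^2+\frac{2\nu}{x}\partial_x,$$
which is precisely the spatial operator of the diffusion equation \eqref{GrindEQ__45_}; thus \eqref{ppn} is in fact an operator-identity restatement of the defining $PDE$, and its complementarity with \eqref{GrindEQ__45_} is manifest.
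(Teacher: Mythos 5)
Your argument is correct and follows the same route the paper intends: differentiate the generating function \eqref{GrindEQ__50_} in $y$ to get the factor $4t$, recognize the $x$-operator $\frac{2}{x}\bigl[\frac12\,{}_l\partial_x+(\nu-\frac12)\partial_x\bigr]$ as $4\hat T$ in the variable $w=x^2$ with $\alpha=\nu-\frac12$, apply the eigenfunction property of $C_\alpha$, and match coefficients of $t^n/n!$. Your closing cross-check, expanding ${}_l\partial_x=\partial_x+x\partial_x^2$ to recover exactly the spatial operator of \eqref{GrindEQ__45_}, correctly explains the claimed complementarity and is a welcome verification the paper leaves implicit.
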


We conclude this paper by going back to the use of umbral formalism, by discussing a particularly interesting example.\\

\noindent We introduce the operator

\begin{equation}\label{ppi}
\hat{p}_m^r\; \pi_0:=r!\delta_{r,m},
\end{equation}
with $\delta_{\alpha,\beta}$ Kronecker symbol, and state the following Theorem on the higher order derivative of power of trinomial.

\begin{thm}\label{MultDerPowerTR}
	$\forall m\in\mathbb{N}$, $\forall a,b,c,x\in\mathbb{R}$, we set
	
\begin{equation}\label{key}
 \partial _{x}^{m} (a\, x^{2} +b\, x+c)^{n} =
m!\, \sum _{r=0}^{\left\lfloor \frac{m}{2} \right\rfloor }\frac{(2\, ax+b)^{m-2r} a^{r} }{(m-2r)!r!} \frac{n!}{(n-m+r)!}(a\, x^{2} +b\, x+c)^{n-m+r}.
\end{equation}
\end{thm}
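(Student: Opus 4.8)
The plan is to avoid differentiating the trinomial power directly and instead to encode all the powers $g(x)^{n}$, with $g(x):=ax^{2}+bx+c$, into a single exponential generating function and to differentiate that. First I would write
\[
\sum_{n=0}^{\infty}\frac{t^{n}}{n!}\,g(x)^{n}=e^{t\,g(x)}=e^{tc}\,e^{t a x^{2}+t b x},
\]
so that the quantity I am after, $\partial_x^{m}g(x)^{n}$, is exactly $n!$ times the coefficient of $t^{n}$ in $\partial_x^{m}e^{t g(x)}$.

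Then I would differentiate the right-hand side using the Hermite identity \eqref{GrindEQ__8_} with the replacements $a\mapsto ta$ and $b\mapsto tb$, which gives
\[
\partial_x^{m}e^{t g(x)}=e^{tc}\,H_{m}\!\big(t(2ax+b),\,ta\big)\,e^{t a x^{2}+t b x}=H_{m}\!\big(t(2ax+b),\,ta\big)\,e^{t g(x)}.
\]
Inserting the explicit series \eqref{hermite} for $H_{m}$ and factoring the powers of $t$, I obtain
\[
\partial_x^{m}e^{t g(x)}=m!\sum_{r=0}^{\lfloor m/2\rfloor}\frac{t^{m-r}(2ax+b)^{m-2r}a^{r}}{(m-2r)!\,r!}\sum_{k=0}^{\infty}\frac{t^{k}}{k!}\,g(x)^{k}.
\]

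The last step is a coefficient comparison. Equating the coefficient of $t^{n}$ on this expression with the corresponding coefficient of $\sum_{n}\frac{t^{n}}{n!}\partial_x^{m}g^{n}$ forces $m-r+k=n$, i.e. $k=n-m+r$, and multiplying through by $n!$ reproduces exactly the claimed formula. Equivalently, and matching the umbral apparatus of eq. \eqref{ppi}, I would perform the extraction operationally by substituting $t\mapsto \hat{p}_{n}$ and writing $\partial_x^{m}g^{n}=\big[\partial_x^{m}e^{\hat{p}_{n}\,g(x)}\big]\pi_0$; the relation $\hat{p}_{n}^{k}\pi_0=k!\,\delta_{k,n}$ then collapses the double sum to the single surviving term $k=n-m+r$.

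The computation is essentially automatic; the only point requiring care is the bookkeeping of the index shift $k=n-m+r$. One must check that the terms with $n-m+r<0$ genuinely drop out — they do, since the coefficient of $t^{n}$ receives no contribution from negative powers of $t$, so the factor $n!/(n-m+r)!$ is to be read as zero in that range (consistent with $\partial_x^{m}$ lowering the degree by $m$). For $n\ge m$ every term in the stated range $0\le r\le\lfloor m/2\rfloor$ survives and no truncation issue arises.
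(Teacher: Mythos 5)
Your proposal is correct and is essentially the paper's own argument: the paper encodes $(ax^2+bx+c)^n$ as $e^{\hat{p}_n(ax^2+bx+c)}\pi_0$ with the umbral symbol $\hat{p}_n$ of eq.~\eqref{ppi} playing exactly the role of your formal variable $t$ plus coefficient extraction, then applies the same Gaussian-derivative identity $\partial_x^{m}e^{Ax^2+Bx}=H_m(2Ax+B,A)e^{Ax^2+Bx}$ and collapses the double sum via $\hat{p}_n^{\,s}\pi_0=s!\,\delta_{s,n}$. You even note this equivalence yourself, so the two proofs coincide up to notation.
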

\begin{proof}
By using the umbral operator \eqref{ppi}, we can write

\begin{equation}\label{key}
x^m=e^{\hat{p}_m x}\;\pi_0=\sum_{r=0}^\infty \frac{x^r}{r!}r!\delta_{r,m}
\end{equation}
and then
\begin{equation*}\label{key}
\begin{split}
 \partial _{x}^{m} (a\, x^{2} +b\, x+c)^{n} &= \partial _{x}^{m} e^{\hat{p}_n (a\, x^{2} +b\, x+c)}\;\pi_0=H_m((2ax+b)\hat{p}_n,a\hat{p_n})e^{\hat{p}_n (a\, x^{2} +b\, x+c)}\;\pi_0=\\
 & =m!\, \sum _{r=0}^{\left\lfloor \frac{m}{2} \right\rfloor }\frac{(2\, ax+b)^{m-2r} a^{r} }{(m-2r)!r!} \sum_{s=0}^\infty \frac{(a\, x^{2} +b\, x+c)^{s} }{s!}\hat{p}_n^{s+m-r}\pi_0=\\
 & =m!\, \sum _{r=0}^{\left\lfloor \frac{m}{2} \right\rfloor }\frac{(2\, ax+b)^{m-2r} a^{r} }{(m-2r)!r!} \sum_{s=0}^\infty \frac{(a\, x^{2} +b\, x+c)^{s} }{s!}(s+m-r)!\delta_{s+m-r,n}=\\
 & =m!\, \sum _{r=0}^{\left\lfloor \frac{m}{2} \right\rfloor }\frac{(2\, ax+b)^{m-2r} a^{r} }{(m-2r)!r!}\frac{n!}{(n-m+r)!}(a\, x^{2} +b\, x+c)^{n-m+r}.
 \end{split}
\end{equation*}
\end{proof}

This concluding example opens the way to further use of the methods, we have treated so far, in a wider context. \\

In this article we have dealt with a description of umbral, operational and integral techniques, we have tried to present a unitary point of view. The results of this paper have fixed the elements for our future researches, including Combinatorics, Group Theory and Invariant Theory.\\

\textbf{Acknowledgements}\\

\noindent The work of Dr. S. Licciardi was supported by an Enea Research Center individual fellowship.\\

\textbf{Author Contributions}\\

\noindent Conceptualization: G.D.; methodology: G.D., S.L.; data curation: S.L.; validation:  G.D., S.L.; formal analysis: G.D., S.L.; writing - original draft preparation: G.D., S.L.; writing - review and editing: S.L. .\\

\textbf{References}

{}

\end{document}